\renewcommand{\ldots}{\dotsc}
\newtheorem{model-problem}{Problem}
\newcommand{\br}{\textbf{r}}
\newcommand{\bq}{\textbf{q}}
\def\T{{\mathcal T}}
\def\S{{\mathcal S}}
\def\E{{\mathcal E}}
\def\pT{{\partial T}}
\def\bn{{\bf n}}
\def\3bar{{|\hspace{-.02in}|\hspace{-.02in}|}}
\def\bbq{\begin{equation*}}
\def\eeq{\end{equation*}}
\def\br{\begin{eqnarray}}
\def\er{\end{eqnarray}}
\def\brr{\begin{eqnarray*}}
\def\err{\end{eqnarray*}}
\def\O{\Omega}
\def\OO{{\cal O}}
\def\E{{\mathcal E}}
\def\pa{\partial}
\def\bn{{\bf n}}
\def\3bar{{|\hspace{-.02in}|\hspace{-.02in}|}}
\newtheorem{WG}{WEAK GALERKIN ALGORITHM}
\newtheorem{SWG}{SIMPLIFIED WEAK GALERKIN ALGORITHM}
\begin{document}

\setlength{\parindent}{0.25in} \setlength{\parskip}{0.08in}

\title{Superconvergence of the Gradient Approximation for Weak Galerkin Finite Element Methods on Nonuniform Rectangular Partitions}

\author{
Dan Li\thanks{Department of Applied Mathematics, Northwestern Polytechnical University, Xi'an, Shannxi 710072, China. The research of Dan Li was supported in part by National Natural Science Foundation of China grant number 11471262.} \and
Chunmei Wang\thanks{Department of Mathematics, Texas State
University, San Marcos, TX 78666, USA. The research of Chunmei Wang was partially supported by National Science Foundation Awards DMS-1648171 and DMS-1749707.}
\and
Junping Wang\thanks{Division of Mathematical
Sciences, National Science Foundation, Alexandria, VA 22314
(jwang@nsf.gov). The research of Junping Wang was supported by the
NSF IR/D program, while working at National Science Foundation.
However, any opinion, finding, and conclusions or recommendations
expressed in this material are those of the author and do not
necessarily reflect the views of the National Science Foundation.}}

\maketitle

\begin{abstract}
This article presents a superconvergence for the gradient approximation of the second order elliptic equation discretized by the weak Galerkin finite element methods on nonuniform rectangular partitions. The result shows a convergence of ${\cal O}(h^r)$, $1.5\le r \le 2$, for the numerical gradient obtained from the lowest order weak Galerkin element consisting of piecewise linear and constant functions. For this numerical scheme, the optimal order of error estimate is ${\cal O}(h)$ for the gradient approximation. The superconvergence reveals a superior performance of the weak Galerkin finite element methods. Some computational results are included to numerically validate the superconvergence theory.
\end{abstract}

\begin{keywords}
weak Galerkin, finite element methods, second order elliptic equations, superconvergence, nonuniform rectangular partitions.
\end{keywords}

\begin{AMS}
Primary 65N30, 65N12, 65N15; Secondary 35Q35, 76R50.
\end{AMS}

\section{Introduction}

Superconvergence is a phenomenon in numerical methods in which approximate solutions converge to the exact solution of the problem at rates higher than the optimal order as measured globally against polynomial interpolations. Superconvergence often occurs at
particular locations of low dimension such as points or lines/curves for two dimensional problems. One of the main tasks in the study of superconvergence is to identify the area (i.e., discrete set of points or lines/curves) where the numerical solutions have superior performance. In scientific computing, superconvergence has been used to yield new approximations with improved and/or prescribed accuracies through postprocessing techniques involving relatively small amount of computation. Superconvergence has also been employed to provide guiding principles or posteriori error estimators for adaptive grid refinement strategies \cite{sup_CMAME1992,sup_SJSC2014,supposter_PIJNM11992,supposter_PIJNM21992,STOKES_NA2017}. Superconvergence has played a significant role in high performance computing ever since its first discovery in the seventies of the last century \cite{dd1973, bs1977}, and the area remains to be a very active branch of numerical partial differential equations.

In this paper, we are concerned with new developments of superconvergence for weak Galerkin finite element approximations of boundary value problems (BVP). For simplicity, we consider the second order elliptic equation that seeks an unknown function $u\in H^1(\Omega)$ satisfying
\begin{eqnarray}
-\nabla \cdot (a\nabla u) &=& f, \quad \mbox{in}~~ \O, \label{a1}\\
u &=& g,\quad \mbox{on}~~ \pa\O , \label{aa1}
\end{eqnarray}
where $\Omega$ is an open bounded domain in $\mathbb{R}^2$ with Lipschitz continuous boundary $\partial \Omega$; $f=f(x,y)\in H^{-1}(\Omega)$ and $g=g(x,y)\in H^{\frac12}(\partial\Omega)$ are given functions defined on $\Omega$ and its boundary $\partial\Omega$, respectively. We assume that the diffusive coefficient tensor $a=\{a_{ij}\}_{2\times 2}$ is symmetric, uniformly bounded and positive definite in $\O$. Here and in what follows of this paper, we adopt the usual notation of Sobolev spaces used in \cite{ciarlet-fem,gr}.

Numerical solutions to \eqref{a1}-\eqref{aa1} can be obtained by using a variety of computational methods, including the finite difference, the finite volume, the collocation, and the finite element methods. For the superconvergence study, we shall focus on the weak Galerkin finite element method (WG-FEM) developed in \cite{ellip_JCAM2013, ellip_MC2014, mwy-pkpk-1, mwy} (see also the references cited therein). WG-FEM is a relatively new numerical method for partial differential equations. The method is technically a generalization of the classical Galerkin finite element method \cite{ciarlet-fem,gr} through a relaxed assumption on the smoothness of the approximating functions. WG-FEM has three fundamental ingredients in its formulation: (1) {\em conventional weak form} - it is based on the usual variational formulation for the PDE problem; (2) {\em weak derivatives} - it makes use of weak (often discontinuous) finite element functions for which generalized weak partial derivatives are introduced by mimicking the definition of distributions; and (3) {\em weak continuity} - parameter-independent stabilizers are employed to ensure weak continuity of the numerical solution. WG-FEM has advantages over the classical Galerkin finite element method in several aspects. First, the approximating functions are flexible and easy to represent as WG is based on piecewise polynomials with great flexibility in the continuity requirement. Next, the finite element partitions are allowed to contain polygons or polyhedra of arbitrary shape in WG-FEM \cite{ellip_MC2014} so that the method is robust with respect to domain partitioning. Furthermore, it is known that WG schemes are absolutely stable, and the corresponding solutions generally preserve the physical quantities inherited by the modeling equations at discrete levels. WG-FEM has been developed for many PDEs including the linear elasticity equation \cite{sup_EP2016}, the Stokes equation \cite{STOKES_NA2017}, Maxwell's equation \cite{MAXWELL_JSC2015,CWang-Maxwell}, the elliptic interface problem \cite{EIP_NA2013}, the Brinkman equation \cite{WG_BRINK2014}, the Helmholtz equation \cite{WG_HEL2014}, the Sobolev equation \cite{PDWG_SE2017}, and the wave equation \cite{parabolicWG_JSC2017} etc. The latest development of WG-FEM is the primal-dual weak Galerkin finite element method for second order elliptic equations in non-divergence form \cite{PDWG_MC2017} and the Fokker-Planck equation \cite{fp}.

Some superconvergence has been observed for weak Galerkin finite element approximations in published and unpublished numerical experiments. In \cite{ellip_NA2013,parabolicWG_JSC2017}, the authors have noted some superconvergence for the gradient approximation in their numerical experiments for the elliptic and hyperbolic equations (e.g., $\|\nabla_d e_h\|$ in Table 4.11 \cite{ellip_NA2013}). In \cite{PDWG_MC2017}, a superconvergence of order ${\cal O}(h^4)$ was observed for the approximation of the primal variable when piecewise quadratic functions are employed in the numerical scheme. To the author's knowledge, no mathematical theory has been derived for the superconvergent results observed numerically in \cite{ellip_NA2013,parabolicWG_JSC2017,PDWG_MC2017}. The goal of this paper is to establish a mathematical theory for some of these numerical observations.

A vast amount of literature now exists on superconvergence for various numerical methods. For the classical Galerkin finite element method, it is well known that superconvergence often occurs when the governing equations have smooth solutions and are approximated by finite element schemes on partitions with special properties such as uniformity, local point symmetry, local translation invariance, and orthogonality (e.g., rectangular partitions). Many results on superconvergence have been derived in the last four decades in the finite element context, for example classical finite element method \cite{sup_SIAM1991, sup_HUNAN1995,sup_GFEM2006,sup_LSS2000,CWang-SuperC}, discontinuous Galerkin method \cite{sup_SJNA2013, sup_SJNAp1997}, hybridizable discontinuous Galerkin method \cite{LUHDG_SIAM2009,SHDG_SIAM2016}, smoothed finite element method \cite{sup_CRC2016}. For weak Galerkin finite element method, Harris \cite{parabolic_AMC2014} derived a superconvergence for elliptic equations by using the $L^{2}$-projection technique introduced in \cite{sup_LSS2000}. It was shown in \cite{parabolic_AMC2014} that the  projected numerical solution of the lowest order is convergent to the exact solution at the rate of $\OO(h^{1.5})$ or better in the usual $H^1$-norm. In \cite{RuishuWang}, a post-processing technique using the polynomial preserving recovery (PPR) was introduced for the WG
approximation arising from schemes with bi-polynomials and over-penalized stabilization terms on uniform rectangular partitions.

The main contribution of this paper is the establishment of an $\OO(h^r)$, $1.5\le r\le 2$, error estimate for the gradient approximation of the model problem \eqref{a1}-\eqref{aa1} when discretized by the lowest order WG-FEM on nonuniform rectangular partitions. The lowest order WG element consists of piecewise linear functions on each element plus piecewise constant functions on each element boundary. The discrete weak gradient is computed as a piecewise constant vector-valued function. For the lowest order WG-FEM under consideration, the optimal order of convergence for the gradient approximation is known to be $\OO(h)$, so that the convergence of order $\OO(h^r)$ reveals a super performance of the corresponding WG-FEM.

The rest of the paper is organized as follows. In Section \ref{Section:WeakGradient}, we briefly review the definition and the computation for weak gradients. In Section \ref{Section:WG}, we present a detailed description of the WG-FEM. Section \ref{Section:SWG} is devoted to the derivation of a simplified formulation for the WG-FEM.
In Section \ref{Section:EE}, we derive error equations for the simplied WG-FEM. In Section \ref{Section:superC}, we carry out a superconvergence analysis in great details. Finally in Section \ref{Section:NE}, we report some numerical results to verify the superconvergence theory.

\section{Weak Gradient and Discrete Weak Gradient}\label{Section:WeakGradient}

This section aims to review preliminaries for the weak Galerkin
finite element method; namely, the discrete weak gradient operator
introduced in \cite{ellip_JCAM2013}.

Throughout the paper, we use the standard notations for Sobolev spaces and norms \cite{ciarlet-fem,gr}. For any open set $D\subset\mathbb{R}^{2}$, $\|\cdot\|_{s,D}$ and $(\cdot,\cdot)_{s,D}$ denote the norm and inner-product in the Sobolev space $H^s(D)$ consisting of square integrable partial derivatives up to order $s$. When $s=0$ and $D=\Omega$, we shall drop the subscripts in the norm and inner-product notation.

Let $T$ be any polygonal domain with boundary $\partial T$. By a weak
function on $T$ we mean $v=\{v_0,v_b\}$ where $v_0\in L^2(T)$ and $v_b\in L^2(\partial T)$. The first component $v_0$
represents the value of $v$ in the interior of $T$, and the second component $v_b$ is the value of $v$ on $\partial T$. We emphasize that
$v_b$ may not be related to the trace of $v_0$ on
$\partial T$, should a trace be well-defined, though $v_b= v_0|_{\partial T}$ is a viable choice in the algorithm design.

Denote by $W(T)$ the space of all weak functions on $T$:
\begin{equation*}
W(T)=\{v=\{v_0,v_b\}: v_0\in L^2(T), v_b\in L^2(\partial T)\}.
\end{equation*}
The weak gradient operator is denoted by $\nabla_w$ from $W(T)$ to the dual space of $[H^1(T)]^2$ whose action on each weak function $v\in W(T)$ is given by
\begin{equation}\label{2.3-2}
\langle \nabla_{w} v,\boldsymbol{\psi}\rangle_T=-(v_0, \nabla \cdot \boldsymbol{ \psi})_T+
\langle v_b ,\boldsymbol{ \psi}\cdot  \textbf{n}\rangle_{\partial T},\qquad \forall \boldsymbol{\psi} \in [H^1(T)]^2.
\end{equation}
Here the left-hand side of \eqref{2.3-2} denotes the action of $\nabla_{w} v$ on $\boldsymbol{\psi} \in [H^1(T)]^2$, and $\bn$ is the unit outward normal vector to $\partial T$.

For the sake of computation, the weak gradient operator $\nabla_w$
must be discretized in one way or another. In this paper, we use polynomials to approximate the weak gradient. More precisely, for
any given non-negative integer $r\geq 0$, let $P_r(T)$ be the set
of polynomials on $T$ with total degree $r$ or less, which is used to approximate the weak gradient.

\begin{definition}
The discrete weak gradient operator, denoted by $\nabla_{w,r,T}$, is defined as a linear operator so that for any $v\in W(T)$, the action $\nabla_{w,r,T} v$ is the unique vector-valued polynomial in $[P_r(T)]^2$ satisfying
\begin{equation}\label{2.4-2}
(\nabla_{w,r,T}v,\boldsymbol{ \psi})_T=-(v_0, \nabla \cdot
\boldsymbol{ \psi})_T+ \langle v_b ,\boldsymbol{ \psi}\cdot
\textbf{n}\rangle_{\partial T},\quad \forall \boldsymbol{ \psi}\in
[P_r(T)]^2.
\end{equation}
\end{definition}

\section{Algorithm of Weak Galerkin}\label{Section:WG}

Let ${\cal T}_h$ be a polygonal partition of the domain $\Omega$ that is shape regular as defined in \cite{ellip_MC2014}. Denote by
$\E_h$ the set of all edges in ${\cal T}_h$, and
$\E_h^0=\E_h\setminus\partial\Omega$ the set of all interior
edges. Let $h_T$ be the diameter of $T\in {\cal
T}_h$ and $h=\max_{T\in {\cal T}_h}h_T$ the mesh size of the
partition ${\cal T}_h$.

Let $k\ge 1$ be a given positive integer. On each element $T\in
{\cal T}_h$, we introduce a local weak finite element space $V(T, k)$ as follows
$$
V(T,k)=\{v =\{v_0, v_b\}: \ v_0\in P_{k}(T), v_b\in P_{k-1}(e),\
e\subset\pT\}.
$$
By patching the local elements $V(T,k)$ through a common value $v_b$
on the interior edges $\E_h^0$, we have a global weak
finite element space
$$
V_h=\{v =\{v_0,v_b\}: \ v|_T\in V(T,k),\ \mbox{$v_b$ is single valued
on $\E_h$}\}.
$$
Denote by $V_h^0$ the subspace of $V_h$ consisting of the finite element functions with vanishing boundary value; i.e.,
$$
V_h^0=\{v=\{v_0,v_b\} \in V_h: \ v_b|_{\partial \Omega}=0\}.
$$

The discrete weak gradient $\nabla_{w,k-1}v$ for $v \in V_h$ is computed by using
vector-valued polynomials of degree $k-1$ on each element $T\in
T_h$; namely,
\begin{eqnarray*}
(\nabla_{w,k-1}v)|_T &=& \nabla_{w,k-1,T}(v|_T), \qquad v\in V_h.
\end{eqnarray*}
For simplicity, we shall drop the subscript $k-1$ from
the discrete weak gradient operator notation $\nabla_{w,k-1}$, and use
$\nabla_d$ to denote $\nabla_{w,k-1}$; i.e.,
$$
\nabla_d v :=\nabla_{w,k-1}v,\qquad v\in V_h.
$$

Next, we introduce two bilinear forms in $V_h\times V_h$
\begin{eqnarray*}
(a\nabla_d w, \nabla_d v)_h &=& \sum_{T\in{\cal T}_h} (a\nabla_d w,
\nabla_d v)_T,\\
s(w,v)&=&\rho h^{-1}\sum_{T\in {\cal T}_h} \langle w_b-Q_b w_0, v_b-Q_b
v_0\rangle_{\partial T},
\end{eqnarray*}
where $\rho > 0$ is any parameter, $Q_b$ is the usual $L^2$
projection operator from $L^{2}(\pT)$ to $P_{k-1}(\pT)$.

\begin{WG}
A numerical approximation for \eqref{a1}-\eqref{aa1} can be obtained by seeking $u_{h}=\{u_{0},u_{b}\}\in V_{h}$, such that $u_b=\widetilde{Q}_b g$ on $\partial \Omega$ and satisfying
\begin{equation}\label{WG-scheme}
(a\nabla_d u_{h}, \nabla_d v)_h+s(u_{h},v)=(f,v_0),\qquad \forall v\in V_h^0,
\end{equation}
where $\widetilde{Q}_b g$ is a suitably chosen projection of the Dirichlet boundary data by using polynomials of degree $k-1$.
\end{WG}

The approximate boundary date $\widetilde{Q}_b g$ may take the following form:
\begin{equation}\label{boundary-approximation}
\widetilde{Q}_b g:=Q_b g + \varepsilon_b,
\end{equation}
where $\varepsilon_b$ is viewed as a small perturbation of the $L^2$ projection $Q_b g$. A typical example of the perturbation term is given by $\varepsilon_b=0$ so that $\widetilde{Q}_b g=Q_b g$. But it will be seen later in Section \ref{Section:superC} that a non-zero perturbation is necessary for a superconvergence of the weak gradient.

Note that the system \eqref{WG-scheme} is symmetric and positive definite for any parameter value $\rho>0$; i.e., the system \eqref{WG-scheme} is solvable.

\section{Simplified WG Formulation}\label{Section:SWG}

For superconvergence, we shall study the lowest order
WG finite element; i.e., $k=1$ in the numerical scheme
(\ref{WG-scheme}). Thus, the finite element
approximation $u_h$ is a piecewise linear function in the interior
and piecewise constant on the element boundary. The discrete weak gradient $\nabla_d u_h$ is a vector-valued polynomial in $[P_{0}(T)]^{2}$.

Note that any $v=\{v_0, v_b\}\in V_h$ can be decomposed as follows
$$
\{v_0, v_b\}=\{v_0, 0\}+\{0, v_b\},
$$
which, for simplicity of notation, shall be denoted as $v=v_{0}+v_{b}$.
Denote by $V_b=\{ v_{b}=\{0, v_b\}\in V_h\}$ the boundary space and
$V_0=\{ v_{0}=\{v_0, 0\}\in V_h\}$ the interior space, respectively.
Using the definition of the discrete weak gradient, it is not hard to see from (\ref{2.4-2}) that $\nabla_d v_{0}=0$ for any $v_0\in V_0$. It follows that
the weak Galerkin algorithm (\ref{WG-scheme}) can be reformulated as
follows: Find $u_{h}\in V_h$ such that $u_b=\widetilde{Q}_b g$ on $\partial \Omega$ and satisfying
\begin{equation}\label{al2}
(a\nabla_d u_b, \nabla_d v_b)_h+s(u_{h},v)=(f,v_0), \qquad \forall  v\in
V_h^0.
\end{equation}

Next, we introduce an extension operator $\S$ that maps
$v_b\in P_0(\pT)$ to a linear function on $T$ such that
\begin{equation}\label{EQ:extension-S:new}
\langle {\S}(v_b), Q_b \phi \rangle_{\partial T} =\langle v_b,
\phi \rangle_{\partial T}, \qquad \forall \phi\in P_1(T).
\end{equation}
Thus, on each element $T$ we have
\begin{eqnarray*}
\langle u_b- Q_b u_0, v_b - Q_b {\S}(v_b)\rangle_{\partial T} & = &
\langle u_b, v_b - Q_b {\S}(v_b)\rangle_{\partial T} \\
&=& \langle u_b- Q_b {\S}(u_b), v_b - Q_b {\S}(v_b)\rangle_{\partial T}.
\end{eqnarray*}
From the above identity, for any $v=\{\S(v_b), v_b\}\in V_h^{0}$ we have
\begin{equation}
\begin{split}
s(u_h,v) & =\rho h^{-1} \sum_{T\in\T_h} \langle u_b-Q_b u_0, v_b-Q_b {\S}(v_b)\rangle_{\partial T}\\
& = \rho h^{-1}\sum_{T\in\T_h} \langle u_b- Q_b {\S}(u_b), v_b-Q_b {\S}(v_b)\rangle_{\partial T}.
\end{split}
\end{equation}
By letting $v=\{{\S}(v_b), v_b\}\in V_h^0$ in (\ref{al2}) we arrive at the following {\em simplified weak Galerkin} method.

\begin{SWG}
Find $u_b\in V_b^g$ such that
\begin{equation}\label{al3}
 \sum_{T\in {\cal T}_h}(a\nabla_d u_b, \nabla_d v_b)_T+ \rho h^{-1}\sum_{T\in {\cal T}_h}
\langle u_b-Q_b{\S}(u_b), v_b-Q_b{\S}(v_b) \rangle_{\partial T}
= (f,  {\S}(v_b))
\end{equation}
for all $v_b\in V_b^0$, where $V_b^0=\{v_b\in V_b: v_b|_{\partial \Omega}=0\}$ and $V_b^g=\{v_b\in V_b: v_b|_{\partial \Omega}=\tilde Q_b g\}$.
\end{SWG}

For simplicity of analysis, we assume that the coefficient tensor $a$ in \eqref{a1} is a piecewise constant matrix with respect to the finite element partition ${\cal T}_h$. The result can be extended to variable coefficient tensors without any difficulty, provided that the tensor $a$ is piecewise smooth. For simplicity of notation, we introduce a flux variable $\textbf{q}=a\nabla u$.

\section{Error Equations}\label{Section:EE}

The goal of this section is to derive an error equation for the
simplified weak Galerkin scheme (\ref{al3}). To this end, let $\mathbb{Q}_{h}$ be the standard $L^2$ projection operator onto the local discrete gradient space $[P_{0}(T)]^{2}$. On each element $T\in\T_h$, the following commutative property holds true  \cite{ellip_JCAM2013}:
\begin{equation}\label{EQ:q0}
\nabla_d Q_b w = \mathbb{Q}_{h} \nabla w,\qquad w\in H^1(T).
\end{equation}
Denote by $e_b=Q_bu-u_b$ the error function between the WG solution and the
$L^2$ projection of the exact solution of the model problem
\eqref{a1}-\eqref{aa1}.

\begin{lemma}
 The error function $e_b=Q_bu-u_b$ satisfies the following equation
\begin{equation}\label{error}
\sum_{T\in {\cal T}_h} (a\nabla_d e_b, \nabla_d v_b)_T
+\rho h^{-1}\sum_{T\in {\cal T}_h}\langle e_b-Q_b{\S}(e_b), v_b-Q_b{\S}(v_b)\rangle_{\partial T}\\
=  \zeta_u (v_b),
\end{equation}
for all $v_{b}\in V_b^0$, where
\begin{equation}\label{EQ:functional}
\begin{split}
\zeta_u(v_b) = & \sum_{T\in {\cal T}_h} \langle {\color{black}{(\textbf{q}
 -\mathbb{Q}_h \textbf{q}) \cdot \bn}}, {\S}(v_b)-v_b\rangle_{\partial T} \\
& \ + \rho h^{-1}{\color{black}{\sum_{T\in {\cal T}_h}}}\langle {\color{black}{Q_b{\S}(Q_bu)-Q_bu}}, {\color{black}{Q_b{\S}(v_b)-v_b}} \rangle_{\partial T}
\end{split}
\end{equation}
is a linear functional on $V_b$.
\end{lemma}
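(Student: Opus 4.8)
The plan is to obtain \eqref{error} by expressing the error $e_b=Q_bu-u_b$ through the simplified scheme \eqref{al3} and isolating the residual generated when the numerical solution $u_b$ is replaced by the projection $Q_bu$. First I would exploit the linearity of $\nabla_d$ and of the stabilizer in both arguments to split the left-hand side of \eqref{error} into a contribution involving $Q_bu$ and a contribution involving $u_b$. The $u_b$ contribution is precisely the left-hand side of \eqref{al3}, which equals $(f,\S(v_b))$; substituting this reduces the whole statement to showing that
\[
\sum_{T\in\T_h}(a\nabla_d Q_bu,\nabla_d v_b)_T+\rho h^{-1}\sum_{T\in\T_h}\langle Q_bu-Q_b\S(Q_bu),\,v_b-Q_b\S(v_b)\rangle_{\partial T}-(f,\S(v_b))=\zeta_u(v_b).
\]

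For the stiffness term I would invoke the commutative property \eqref{EQ:q0}, i.e. $\nabla_d Q_bu=\mathbb{Q}_{h}\nabla u$, together with the assumption that $a$ is piecewise constant, which gives $a\nabla_d Q_bu=a\,\mathbb{Q}_{h}\nabla u=\mathbb{Q}_{h}(a\nabla u)=\mathbb{Q}_{h}\textbf{q}$. Since $\mathbb{Q}_{h}\textbf{q}\in[P_0(T)]^2$ and $v_b$ has vanishing interior component, the defining identity \eqref{2.4-2} of the discrete weak gradient yields $(\mathbb{Q}_{h}\textbf{q},\nabla_d v_b)_T=\langle v_b,(\mathbb{Q}_{h}\textbf{q})\cdot\bn\rangle_{\partial T}$, converting the volumetric term into a pure edge integral.

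Next I would treat the load term using $f=-\nabla\cdot\textbf{q}$. Integration by parts on each $T$ gives $(f,\S(v_b))_T=(\textbf{q},\nabla\S(v_b))_T-\langle\textbf{q}\cdot\bn,\S(v_b)\rangle_{\partial T}$; since $\S(v_b)$ is linear, $\nabla\S(v_b)$ is constant, so $(\textbf{q},\nabla\S(v_b))_T=(\mathbb{Q}_{h}\textbf{q},\nabla\S(v_b))_T=\langle(\mathbb{Q}_{h}\textbf{q})\cdot\bn,\S(v_b)\rangle_{\partial T}$, the final step using that $\mathbb{Q}_{h}\textbf{q}$ is divergence free. Hence $(f,\S(v_b))=\sum_{T\in\T_h}\langle(\mathbb{Q}_{h}\textbf{q}-\textbf{q})\cdot\bn,\S(v_b)\rangle_{\partial T}$. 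Combining this with the stiffness edge integral and regrouping produces the desired first term of $\zeta_u$ together with a leftover contribution $\sum_{T\in\T_h}\langle\textbf{q}\cdot\bn,v_b\rangle_{\partial T}$.

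The step I expect to be the crux is disposing of this leftover term, since it is where the structure of the exact problem enters. Because $\textbf{q}=a\nabla u$ lies in $H(\mathrm{div};\Omega)$, its normal component is continuous across interior edges; as $v_b$ is single valued on $\E_h$ and vanishes on $\partial\Omega$ for $v_b\in V_b^0$, the contributions of the two elements sharing each interior edge cancel while boundary edges contribute nothing, so $\sum_{T\in\T_h}\langle\textbf{q}\cdot\bn,v_b\rangle_{\partial T}=0$. Finally the stabilizer term is matched with the second term of $\zeta_u$ simply by reversing the sign of both factors in the inner product, which leaves the product unchanged, and linearity of $\zeta_u$ in $v_b$ is immediate from its definition. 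The main subtlety throughout is the careful $\S(v_b)$-versus-$v_b$ bookkeeping in the edge integrals, so that the flux-continuity cancellation lands exactly on the unwanted term and the remaining pieces assemble into $\zeta_u(v_b)$.
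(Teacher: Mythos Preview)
Your proposal is correct and follows essentially the same route as the paper's proof: both arguments use the commutative identity $\nabla_d Q_b u=\mathbb{Q}_h\nabla u$, the piecewise-constant assumption on $a$ to rewrite $a\mathbb{Q}_h\nabla u=\mathbb{Q}_h\textbf{q}$, the weak-gradient definition \eqref{2.4-2} to pass to edge integrals, an integration by parts against $\S(v_b)$, and the flux-continuity cancellation $\sum_{T}\langle\textbf{q}\cdot\bn,v_b\rangle_{\partial T}=0$. The only difference is cosmetic ordering: the paper first expands $\sum_T(a\nabla_d Q_bu,\nabla_d v_b)_T$ all the way to $(f,\S(v_b))+\text{remainder}$ and then subtracts the scheme \eqref{al3}, whereas you split $e_b=Q_bu-u_b$ at the outset and handle the $(f,\S(v_b))$ term separately; the underlying identities are identical.
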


\begin{proof}
Using (\ref{EQ:q0}), (\ref{2.4-2}) with $\boldsymbol\psi={\mathbb{Q}_h \textbf{q}}$, and
the usual integration by parts, we obtain
\begin{equation*}
\begin{split}
 \sum_{T\in {\cal T}_h} (a\nabla_d Q_b u, \nabla_d v_b)_T
 =& \sum_{T\in {\cal T}_h} (a \mathbb{Q}_h \nabla u, \nabla_d v_b)_T \\
 =& \sum_{T\in {\cal T}_h} ({\mathbb{Q}_h \textbf{q}}, \nabla_d v_b)_T \\
=& \sum_{T\in {\cal T}_h} ({\mathbb{Q}_h \textbf{q}}, \nabla {\S}(v_b))_T+{\sum_{T\in {\cal T}_h}}\langle {\mathbb{Q}_h \textbf{q}}\cdot \bn, v_b-{\S}(v_b)\rangle_{\partial T}\\
= &\sum_{T\in {\cal T}_h} ({\color{black}{\textbf{q}}}, \nabla {\S}(v_b))_T+{\color{black}{\sum_{T\in {\cal T}_h}}}\langle{\color{black}{\mathbb{Q}_h \textbf{q}}}\cdot \bn, v_b-{\S}(v_b)\rangle_{\partial T}\\
= &\sum_{T\in {\cal T}_h} (-{\color{black}{\nabla\cdot  \textbf{q}}},  {\S}(v_b))_T+{\color{black}{\sum_{T\in {\cal T}_h}}}\langle  {\color{black}{\textbf{q}}}\cdot \bn, {\S}(v_b)\rangle_{\partial T}\\&+{\color{black}{\sum_{T\in {\cal T}_h}}}\langle{\color{black}{\mathbb{Q}_h \textbf{q}}}\cdot \bn, v_b-{\S}(v_b)\rangle_{\partial T}\\
= &(f,  {\S}(v_b)) + \sum_{T\in {\cal T}_h} \langle {\color{black}{ \textbf{q}}}\cdot \bn, {\S}(v_b)-v_b\rangle_{\partial T}+{\color{black}{\sum_{T\in {\cal T}_h}}}\langle {\color{black}{\mathbb{Q}_h \textbf{q}}}\cdot \bn, v_b-{\S}(v_b)\rangle_{\partial T}\\
= &(f,  {\S}(v_b)) + \sum_{T\in {\cal T}_h} \langle  {\color{black}{(\textbf{q}-\mathbb{Q}_h \textbf{q})}}  \cdot \bn), {\S}(v_b)-v_b\rangle_{\partial T},
\end{split}
\end{equation*}
where we have also used ${\color{black}{-\nabla \cdot \textbf{q}=f}}$ and $\sum_{T\in {\cal T}_h}
\langle {\color{black}{\textbf{q}}} \cdot \bn, v_b\rangle_{\partial T}=0$ as is single-valued on $\E_h^0$ and $v_b=0$ on
$\partial \Omega$. Thus, from the simplified WG algorithm (\ref{al3}) we obtain
\begin{equation*}
\begin{split}
 &\sum_{T\in {\cal T}_h} ({\color{black}{a}}\nabla_d (Q_b u-u_b), \nabla_d v_b)_T
 +\rho h^{-1} {\color{black}{\sum_{T\in {\cal T}_h}}}\langle e_b-Q_b{\S}(e_b), v_b-Q_b{\S}(v_b)\rangle_{\partial T}\\
=&\ (f,  {\S}(v_b))
+ \sum_{T\in {\cal T}_h} \langle  {\color{black}{(\textbf{q}-\mathbb{Q}_h \textbf{q})}} \cdot \bn, {\S}(v_b)-v_b\rangle_{\partial T}\\
 & -(f,  {\S}(v_b)) +{\color{black}{\rho h^{-1}}}\sum_{T\in {\cal T}_h} \langle Q_bu-Q_b{\S}(Q_bu), v_b-Q_b{\S}(v_b) \rangle_{\partial T}\\
=&  \sum_{T\in {\cal T}_h} \langle  {\color{black}{(\textbf{q}-\mathbb{Q}_h \textbf{q})}} \cdot \bn, {\S}(v_b)-v_b\rangle_{\partial T} + \rho h^{-1}
{\color{black}{\sum_{T\in {\cal T}_h}}}\langle Q_bu-Q_b{\S}(Q_bu), v_b-Q_b{\S}(v_b)
\rangle_{\partial T}.
 \end{split}
\end{equation*}
This completes the proof of the lemma.
\end{proof}

\section{Superconvergence on Rectangular Elements}\label{Section:superC}

Consider the model problem (\ref{a1})-(\ref{aa1}) defined on rectangular domains. For simplicity, let the domain be given by $\Omega = (0,1)^2$ which is partitioned into rectangular elements as the Cartesian product of two partitions $\Delta_x$ and $\Delta_y$ for the
unit interval $I=(0,1)$:
\begin{eqnarray*}
\Delta_x:\ && 0=x_0< x_1<x_2<\ldots<x_{n-1}<x_n=1,\\
\Delta_y:\ && 0=y_0< y_1<y_2<\ldots<y_{m-1}<y_m=1.
\end{eqnarray*}
The solution of this model problem can be approximated by using the simplified weak Galerkin finite element scheme \eqref{al3}. The goal of this section is to study the accuracy or superconvergence of the numerical solutions when the lowest order (i.e., $k=1$) of element is employed.

\subsection{Some technical results}
Each rectangular element $T\in\T_h$ can be represented as
$T=[x_{i-1}, x_{i}]\times [y_{j-1}, y_{j}]$ for $i\in \{1, 2,
\cdots, n\}$ and $j\in \{1, 2, \cdots, m\}$. Figure
\ref{rectangular-element} depicts a typical rectangular element
under consideration. For convenience, the length of edge $e_s$ is
denoted as $|e_s|$ so that $|e_1|=|e_2|$ and $|e_3|=|e_4|$, and the
mid-point of edge $e_s$ is denoted as $M_s=(x_s^*, y_s^*),
s=1,\ldots, 4$.

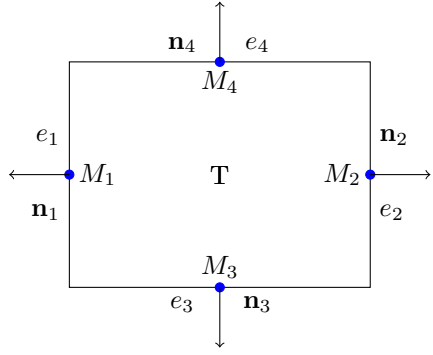
\begin{figure}[h]
\begin{center}
\begin{tikzpicture}
\coordinate (A1) at (-2,0); 
\coordinate (A2) at (2, 0); 
\coordinate (A3) at (2, 3); 
\coordinate (A4) at (-2,3); 
\coordinate (M1) at (-2,1.5);
\coordinate (M2) at (2,1.5);
\coordinate (M3) at (0,0);
\coordinate (M4) at (0,3);
\coordinate (M1end) at (-2.8,1.5);
\coordinate (M2end) at (2.8,1.5);
\coordinate (M3end) at (0.0,-0.8);
\coordinate (M4end) at (0,3.8);
\draw node[right] at (M1) {$M_1$};
\draw node[left] at (M2) {$M_2$};
\draw node[above] at (M3) {$M_3$};
\draw node[below] at (M4) {$M_4$};
\coordinate (center) at (0,1.5);
\coordinate (e1) at (-2,2.0);
\coordinate (e2) at (2,1.0);
\coordinate (e3) at (-0.5,0);
\coordinate (e4) at (0.5,3);
\coordinate (ne1) at (-2,1.0);
\coordinate (ne2) at (2,2.0);
\coordinate (ne3) at (0.5,0);
\coordinate (ne4) at (-0.5,3);
\draw node[left] at (e1) {$e_1$};
\draw node[right] at (e2) {$e_2$};
\draw node[below] at (e3) {$e_3$};
\draw node[above] at (e4) {$e_4$};
\draw node[left] at (ne1) {$\bn_1$};
\draw node[right] at (ne2) {$\bn_2$};
\draw node[below] at (ne3) {$\bn_3$};
\draw node[above] at (ne4) {$\bn_4$};
\draw node at (center) {T}; \draw node
at (center) {T}; \draw (A1)--(A2)--(A3)--(A4)--cycle;
\filldraw[blue] (M1) circle(0.06);
\filldraw[blue] (M2) circle(0.06);
\filldraw[blue] (M3) circle(0.06);
\filldraw[blue] (M4) circle(0.06);
\draw[->] (M1)--(M1end);
\draw[->] (M2)--(M2end);
\draw[->] (M3)--(M3end);
\draw[->] (M4)--(M4end);
\end{tikzpicture}
\caption{Depiction of a rectangular element
$T\in\T_h$}\label{rectangular-element}
\end{center}
\end{figure}

On the element $T$, denote by $v_{bs}$ the value of $v_b$ on the edge $e_s, \ s=1,\ldots, 4$. From (\ref{2.4-2}) for the weak gradient, we have
$$
(\nabla_d v_b, \boldsymbol \psi)_T = \langle v_b, \boldsymbol\psi\cdot\bn\rangle_\pT,\qquad \forall \boldsymbol\psi \in [P_0(T)]^2,
$$
which leads to the following formulation
\begin{equation}\label{EQ:weak-gradient}
\nabla_d v_b = \left(\frac{v_{b2}-v_{b1}}{|e_3|},\frac{v_{b4}-v_{b3}}{|e_1|}\right)^\prime.
\end{equation}

Recall that the extension function ${\S}(v_b)\in P_1(T)$ is defined by the equation (\ref{EQ:extension-S:new}) with
linear test function $\phi$ so that $Q_b\phi = \phi(M_s)$ on each edge $e_s$. Thus, the equation (\ref{EQ:extension-S:new}) can be rewritten as
\begin{equation}\label{svb:001}
\sum_{s=1}^4 |e_s| {\S}(v_b) (M_s) \phi(M_s) =\sum_{s=1}^4 |e_s|
v_{bs} \phi(M_s),\qquad \forall \phi\in P_1(T).
\end{equation}

\begin{lemma}\label{midvalue}
Let $v_b$ be given on the element $T=[x_{i-1}, x_i]\times [y_{j-1},
y_j]$, and ${\S}(v_b)\in P_1(T)$ be the extension function of $v_b$ in
$T$ defined by (\ref{svb:001}). Then, the following {\color{black}{results hold}}
true
\begin{equation}\label{EQ:S-Property:002}
\begin{split}
({\S}(v_b)-v_b)(M_1) & =({\S}(v_b)-v_b)(M_2)\\
& = \frac{|e_3|}{2(|e_1|+|e_3|)}(v_{b3}+v_{b4}-v_{b1}-v_{b2}),\\
\end{split}
\end{equation}
\begin{equation}\label{EQ:S-Property:008}
\begin{split}
({\S}(v_b)-v_b)(M_3) & =({\S}(v_b)-v_b)(M_4)\\
& =- \frac{|e_1|}{2(|e_1|+|e_3|)}(v_{b3}+v_{b4}-v_{b1}-v_{b2}).
\end{split}
\end{equation}
Hence,
\begin{equation}\label{EQ:S-Property:021}
|e_1|({\S}(v_b)-v_b)(M_1) = - |e_3| ({\S}(v_b)-v_b)(M_3).
\end{equation}
\end{lemma}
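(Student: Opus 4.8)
The plan is to compute the four nodal values $\mathcal{S}(v_b)(M_s)$ of the linear extension and subtract the edge values $v_b(M_s)=v_{bs}$. Writing $d_s := (\mathcal{S}(v_b)-v_b)(M_s) = \mathcal{S}(v_b)(M_s)-v_{bs}$, the defining identity \eqref{svb:001} rearranges into the single orthogonality statement
\[
\sum_{s=1}^4 |e_s|\, d_s\, \phi(M_s) = 0, \qquad \forall \phi \in P_1(T).
\]
Thus the object I must control is the vector $\big(|e_1|d_1,\,|e_2|d_2,\,|e_3|d_3,\,|e_4|d_4\big)\in\mathbb{R}^4$: it is orthogonal to $(\phi(M_1),\dots,\phi(M_4))$ for every $\phi\in P_1(T)$. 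The evaluation map $\phi\mapsto(\phi(M_s))_s$ is injective on $P_1(T)$ (its three arguments are affinely independent), so its image is a three-dimensional subspace of $\mathbb{R}^4$ with a one-dimensional orthogonal complement.

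First I would place the origin at the center of $T$, so that $M_1=(-\tfrac{|e_3|}{2},0)$, $M_2=(\tfrac{|e_3|}{2},0)$, $M_3=(0,-\tfrac{|e_1|}{2})$, $M_4=(0,\tfrac{|e_1|}{2})$, and test the orthogonality condition on the basis $\{1,\,x,\,y\}$ of $P_1(T)$. This produces the three spanning vectors $(1,1,1,1)$, $(-\tfrac{|e_3|}{2},\tfrac{|e_3|}{2},0,0)$ and $(0,0,-\tfrac{|e_1|}{2},\tfrac{|e_1|}{2})$, whose orthogonal complement is readily checked to be the line spanned by $(1,1,-1,-1)$. Hence there is a scalar $\lambda$ with $|e_1|d_1=|e_2|d_2=\lambda$ and $|e_3|d_3=|e_4|d_4=-\lambda$. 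Using $|e_1|=|e_2|$ and $|e_3|=|e_4|$, this at once gives $d_1=d_2$, $d_3=d_4$, and the relation $|e_1|d_1=\lambda=-|e_3|d_3$, which is exactly \eqref{EQ:S-Property:021}. It then remains only to determine the single scalar $\lambda$.

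To pin down $\lambda$ I would invoke that $\mathcal{S}(v_b)$ is a genuine linear polynomial, so its values at the two pairs of opposite midpoints have equal averages: $\mathcal{S}(v_b)(M_1)+\mathcal{S}(v_b)(M_2)=\mathcal{S}(v_b)(M_3)+\mathcal{S}(v_b)(M_4)$, both equal to twice the value at the center. Rewriting this in terms of $d_s$ yields $d_1+d_2-d_3-d_4 = v_{b3}+v_{b4}-v_{b1}-v_{b2}$, and substituting $d_1=d_2=\lambda/|e_1|$, $d_3=d_4=-\lambda/|e_3|$ gives $2\lambda\big(|e_1|^{-1}+|e_3|^{-1}\big)=v_{b3}+v_{b4}-v_{b1}-v_{b2}$, i.e.
\[
\lambda=\frac{|e_1||e_3|}{2(|e_1|+|e_3|)}\big(v_{b3}+v_{b4}-v_{b1}-v_{b2}\big).
\]
Dividing by $|e_1|$ and by $-|e_3|$ recovers \eqref{EQ:S-Property:002} and \eqref{EQ:S-Property:008}, respectively.

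The only genuinely delicate points are bookkeeping: correctly matching each edge $e_s$ and midpoint $M_s$ with the orientation in Figure~\ref{rectangular-element}, so that $|e_1|=|e_2|$ are the vertical edges and $|e_3|=|e_4|$ the horizontal ones, and verifying that the orthogonal complement is precisely $\mathrm{span}(1,1,-1,-1)$, which is where the symmetry of the midpoints about the center enters. As a cross-check one may instead solve \eqref{svb:001} directly for the coefficients of $\mathcal{S}(v_b)=\alpha+\beta(x-x_c)+\gamma(y-y_c)$: the test functions $1$, $x-x_c$, $y-y_c$ decouple the three coefficients and produce $\beta$, $\gamma$ equal to the two components of $\nabla_d v_b$ in \eqref{EQ:weak-gradient}, after which evaluating $\mathcal{S}(v_b)-v_b$ at the midpoints reproduces the same formulas by elementary algebra.
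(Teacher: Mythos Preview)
Your argument is correct and reaches the same formulas, but it proceeds along a different and somewhat more structural route than the paper. The paper writes $\mathcal{S}(v_b)=c_1+c_2(x-x_c)+c_3(y-y_c)$, solves for $c_1,c_2,c_3$ by testing \eqref{svb:001} against $\phi=1,\,x-x_c,\,y-y_c$, and then evaluates $\mathcal{S}(v_b)-v_b$ at each midpoint by direct substitution. You instead recast \eqref{svb:001} as an orthogonality condition on the weighted difference vector $(|e_s|d_s)_s$ in $\mathbb{R}^4$, identify the one-dimensional orthogonal complement $\mathrm{span}(1,1,-1,-1)$, and only afterwards fix the single free scalar $\lambda$ using the midpoint symmetry of a linear function. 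Your approach yields the relations $d_1=d_2$, $d_3=d_4$ and \eqref{EQ:S-Property:021} before any explicit algebra, which is conceptually clean and makes clear why the symmetry of the midpoints about the center is the essential ingredient; the paper's approach is more computational but makes the actual coefficients $c_2,c_3$ (and hence $\nabla\mathcal{S}(v_b)=\nabla_d v_b$) explicit along the way, a by-product used later in \eqref{EQ:chi-properties}. You correctly note this direct computation as a cross-check, so nothing is lost.
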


\begin{proof} Let $(x_c, y_c)$ be the center of the element $T$, and
assume
$$
{\S}(v_b)=c_1+c_2(x-x_c)+c_3(y-y_c).
$$
As ${\S}(v_b)$ satisfies (\ref{svb:001}), we may choose $\phi=1$
in (\ref{svb:001}) to obtain
\begin{equation*}
\begin{split}
\sum_{s=1}^4 |e_s| \Big( c_1+c_2(x_s^*-x_c)+c_3(y_s^*-y_c)
\Big)=\sum_{s=1}^4 |e_s|v_{bs},
\end{split}
\end{equation*}
which leads to
$$
c_1=\frac{|e_1|(v_{b1}+v_{b2})+|e_3|(v_{b3}+v_{b4})}{2|e_1|+2|e_3|}.
$$
Next, by letting $\phi=x-x_c$ in (\ref{svb:001}) we obtain
\begin{equation*}
\begin{split}
\sum_{s=1}^4 |e_s|  \Big( c_1+c_2(x_s^*-x_c)+c_3(y_s^*-y_c) \Big)
(x_s^*-x_c)=\sum_{s=1}^4 |e_s| v_{bs} (x_s^*-x_c),
\end{split}
\end{equation*}
which gives rise to
$$
c_2=\frac{v_{b2}-v_{b1}}{|e_3|}.
$$
Analogously, by letting $\phi=y-y_c$ in (\ref{svb:001}) we arrive at
$$
c_3=\frac{v_{b4}-v_{b3}}{|e_1|}.
$$
It follows that
$$
{\S}(v_b)=\frac{|e_1|(v_{b1}+v_{b2})+|e_3|(v_{b3}+v_{b4})}{2(|e_1|+|e_3|)}+\frac{v_{b2}-v_{b1}}{|e_3|}(x-x_c)
+\frac{v_{b4}-v_{b3}}{|e_1|}(y-y_c).
$$

Next, we will compute the value of ${\S}(v_b)-v_b$ at the
midpoint of each edge. At the midpoint $M_1=(x_1^*, y_1^*)$ of the edge $e_1$, we have
\begin{equation*}
\begin{split}
({\S}(v_b)-v_b)|_{M_1}=&\frac{|e_1|(v_{b1}+v_{b2})+|e_3|(v_{b3}+v_{b4})}{2(|e_1|+|e_3|)}- \frac{|e_3|}{2}
\frac{v_{b2}-v_{b1}}{|e_3|}- v_{b1} \\
=&   \frac{|e_3|}{2(|e_1|+|e_3|)}(v_{b3}+v_{b4}-v_{b1}-v_{b2}).\\
\end{split}
\end{equation*}
At the midpoint $M_2$ of the edge $e_2$, we have
\begin{equation*}
\begin{split}
({\S}(v_b)-v_b)|_{M_2}=&\frac{|e_1|(v_{b1}+v_{b2})+|e_3|(v_{b3}+v_{b4})}{2(|e_1|+|e_3|)}
+\frac{|e_3|}{2} \frac{v_{b2}-v_{b1}}{|e_3|}- v_{b2} \\
=& \frac{|e_3|}{2(|e_1|+|e_3|)}(v_{b3}+v_{b4}-v_{b1}-v_{b2}).\\
\end{split}
\end{equation*}
Similarly, at the midpoint of the edges $e_3$ and $e_4$, we have

\begin{equation*}
\begin{split}
({\S}(v_b)-v_b)|_{M_3}
=&-\frac{|e_1|}{2(|e_1|+|e_3|)}(v_{b3}+v_{b4}-v_{b1}-v_{b2}).\\
\end{split}
\end{equation*}

$$
({\S}(v_b)-v_b)|_{M_4}=
-\frac{|e_1|}{2(|e_1|+|e_3|)}(v_{b,3}+v_{b,4}-v_{b,1}-v_{b,2}).
$$
This completes the proof of the lemma.
\end{proof}

We now turn back to the two terms on the right-hand side of the
error equation (\ref{error}). The first term is given by
$\sum_{T\in\T_h}\langle{\color{black}{(\textbf{q}-\mathbb{Q}_h \textbf{q})}} \cdot \bn, {\S}(v_b)-v_b\rangle_{\partial T}$ which is the central topic of
analysis as shown in the following lemma. Recall that ${\S}(v_b)$ is a linear fitting of $v_b$ on the element $T$ by using the formula (\ref{svb:001}). It is easy to see that the four outward
normal vectors on the element boundary are given as column vectors by
$\bn_1=(-1,0)'$, $\bn_2=(1,0)'$, $\bn_3=(0,-1)'$, and $\bn_4=(0,1)'$.

From Lemma \ref{midvalue}, the function ${\S}(v_b)-v_b$ has the same value at the midpoints of $e_1$ and $e_2$. Furthermore, ${\S}(v_b)-v_b$ has the same directional derivative along
$e_1$ and $e_2$ which is $\partial_y {\S}(v_b)=\nabla_d v_b \cdot \bn_4$. Thus, ${\S}(v_b)-v_b$ has the same value along $e_1$ and $e_2$ at the symmetric points $(x_{i-1}, y)$ and $(x_{i}, y)$. Similarly, ${\S}(v_b)-v_b$ has the same value along $e_3$ and $e_4$ at the symmetric points $(x, y_{j-1})$ and $(x, y_j)$.  It follows that
\begin{equation}\label{EQ:12:29:100}
\langle  {\color{black}{\mathbb{Q}_h \textbf{q}}}\cdot \bn, {\S}(v_b)-v_b\rangle_{\partial T}
=0.
\end{equation}

As ${\S}(v_b)-v_b$ has the same value at the points $(x_{i-1},
y)$ and $(x_{i}, y)$, this boundary function can be extended to the
rectangular element $T$ by assuming the value $({\S}(v_b)-v_b)(x_{i-1},y)$ along each horizontal line segment. For simplicity, we denote this extension as $\chi_1$, i.e.,
\begin{equation}\label{EQ:chi-one}
\chi_1(x,y):=({\S}(v_b)-v_b)(x_{i-1},y),\qquad (x,y)\in T.
\end{equation}
Analogously, ${\S}(v_b)-v_b$ can be extended to $T$ by using its
information on the edges $e_3$ and $e_4$, yielding
\begin{equation}\label{EQ:chi-two}
\chi_2(x,y):=({\S}(v_b)-v_b)(x,y_{j-1}),\qquad (x,y)\in T.
\end{equation}
From Lemma \ref{midvalue}, we have
\begin{equation}\label{EQ:chi-properties}
\begin{split}
& \partial_x \chi_1  = 0,\quad \partial_y \chi_1  = \nabla_d v_b \cdot \bn_4,\\
& \partial_y \chi_2  = 0, \quad \partial_x \chi_2  = \nabla_d v_b \cdot \bn_2,\\
& |e_1|\chi_1(M_1) = - |e_{3}|\chi_2(M_{3}).
\end{split}
\end{equation}

\begin{lemma}\label{Lemma:Lemma-001}
Let ${\color{black}{u}}\in H^3(\Omega)$ be a given function, and $\T_h = \Delta_x \times \Delta_y $ be the rectangular partition. On each element $T\in \T_h$ depicted as in Figure \ref{rectangular-element}, for any $v_{b}\in
V_b^{0}$ we have the following expansion
\begin{equation}\label{est1:010}
\begin{split}
 &\langle {\color{black}{(\textbf{q}-\mathbb{Q}_h \textbf{q})}}\cdot {\color{black}{\bn}}, {\S}(v_b)-v_b\rangle_{\partial T} \\
= & {\color{black}{\chi_1(M_1) \int_T q_{1x} dT+ \chi_2(M_3) \int_T q_{2y} dT}} + R_1(T),
\end{split}
\end{equation}
{\color{black}{where $\textbf{q}=(q_1, q_2){\color{black}{'}}=(a_{11} u_x+a_{12} u_y, a_{21} u_x+a_{22} u_y){\color{black}{'}}$, $q_{1x}=\frac{\partial q_1}{\partial x}$, $q_{2y}=\frac{\partial q_2}{\partial y}$.}}
The remainder term $R_1(T)$ satisfies the following estimate
$$
\sum_{T\in\T_h}|R_{1}(T)|\leq C h^{2} \| \bq\|_{2} \|\nabla_d v_b\|_0.
$$
\end{lemma}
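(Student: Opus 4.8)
The plan is to evaluate the boundary integral edge by edge, convert it into two volume integrals by the fundamental theorem of calculus, and then split off the leading midpoint contributions, leaving a remainder that is controlled through a zero-average argument.

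First I would discard the projection term. By the orthogonality relation \eqref{EQ:12:29:100}, $\langle\mathbb{Q}_h\textbf{q}\cdot\bn,\S(v_b)-v_b\rangle_{\partial T}=0$, so it suffices to analyze $\langle\textbf{q}\cdot\bn,\S(v_b)-v_b\rangle_{\partial T}$. On $T=[x_{i-1},x_i]\times[y_{j-1},y_j]$ this splits into four edge integrals. Using the normals $\bn_1=(-1,0)'$, $\bn_2=(1,0)'$, $\bn_3=(0,-1)'$, $\bn_4=(0,1)'$ together with the fact, established just before the lemma, that $\S(v_b)-v_b$ takes the common value $\chi_1(y)$ at the symmetric points $(x_{i-1},y)$, $(x_i,y)$ and the common value $\chi_2(x)$ at $(x,y_{j-1})$, $(x,y_j)$, I would pair $e_1$ with $e_2$ and $e_3$ with $e_4$ to get
\begin{equation*}
\langle\textbf{q}\cdot\bn,\S(v_b)-v_b\rangle_{\partial T}=\int_{y_{j-1}}^{y_j}\big(q_1(x_i,y)-q_1(x_{i-1},y)\big)\chi_1\,dy+\int_{x_{i-1}}^{x_i}\big(q_2(x,y_j)-q_2(x,y_{j-1})\big)\chi_2\,dx.
\end{equation*}
The identity $q_1(x_i,y)-q_1(x_{i-1},y)=\int_{x_{i-1}}^{x_i}q_{1x}\,dx$ (and its $y$-analogue) then turns these into the volume integrals $\int_T q_{1x}\chi_1\,dT+\int_T q_{2y}\chi_2\,dT$.

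Next I would split off the midpoint values. Since $\S(v_b)$ is affine, $\chi_1$ is affine in $y$ and $\chi_2$ is affine in $x$, so $\chi_1=\chi_1(M_1)+(\partial_y\chi_1)(y-y_1^*)$ and $\chi_2=\chi_2(M_3)+(\partial_x\chi_2)(x-x_3^*)$, where $y_1^*$, $x_3^*$ denote the midpoint coordinates of $M_1$, $M_3$. The constant pieces yield exactly the leading terms $\chi_1(M_1)\int_T q_{1x}\,dT+\chi_2(M_3)\int_T q_{2y}\,dT$, so I would define the remainder by
\begin{equation*}
R_1(T)=(\partial_y\chi_1)\int_T q_{1x}(y-y_1^*)\,dT+(\partial_x\chi_2)\int_T q_{2y}(x-x_3^*)\,dT.
\end{equation*}

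The main obstacle is to show that $R_1(T)$ is genuinely of order $h^2$ uniformly over the nonuniform mesh. The key is that $y_1^*$ is the midpoint of $[y_{j-1},y_j]$, so $\int_{y_{j-1}}^{y_j}(y-y_1^*)\,dy=0$; this lets me subtract the slice value $q_{1x}(x,y_1^*)$ at no cost and write $\int_T q_{1x}(y-y_1^*)\,dT=\int_T\big(q_{1x}(x,y)-q_{1x}(x,y_1^*)\big)(y-y_1^*)\,dT$. A one-dimensional Taylor/FTC estimate gives $\|q_{1x}(\cdot,\cdot)-q_{1x}(\cdot,y_1^*)\|_{0,T}\le C|e_1|\,\|q_{1xy}\|_{0,T}$, while $\|y-y_1^*\|_{0,T}\le C|e_1|^{3/2}|e_3|^{1/2}$ by direct computation. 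Because $\nabla_d v_b$ is constant on $T$, relation \eqref{EQ:chi-properties} gives $|\partial_y\chi_1|=|\nabla_d v_b\cdot\bn_4|\le\|\nabla_d v_b\|_{0,T}/|T|^{1/2}$, and the factor $|T|^{1/2}=(|e_1||e_3|)^{1/2}$ in the denominator cancels against $\|y-y_1^*\|_{0,T}$, leaving $C|e_1|^2\le Ch^2$. The crucial point is that the zero-average property supplies the extra factor $|e_1|$ that upgrades the naive $\mathcal{O}(h)$ bound to $\mathcal{O}(h^2)$, which is precisely the mechanism driving the superconvergence. Treating the second term symmetrically, I would obtain $|R_1(T)|\le Ch^2\|\nabla_d v_b\|_{0,T}\|\textbf{q}\|_{2,T}$ (the broken $H^2$-norm of $\textbf{q}$ being finite since $u\in H^3$ and $a$ is piecewise constant), and a Cauchy--Schwarz summation over $T\in\T_h$ would complete the estimate $\sum_{T}|R_1(T)|\le Ch^2\|\textbf{q}\|_2\|\nabla_d v_b\|_0$.
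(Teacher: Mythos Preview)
Your proof is correct and follows essentially the same route as the paper: drop $\mathbb{Q}_h\mathbf{q}$ via \eqref{EQ:12:29:100}, pair opposite edges to convert the boundary integral into the volume integrals $\int_T q_{1x}\chi_1\,dT+\int_T q_{2y}\chi_2\,dT$, and split off the midpoint values of $\chi_1,\chi_2$. The only minor difference is in bounding the remainder $\partial_y\chi_1\int_T q_{1x}(y-y_c)\,dT$: the paper integrates by parts in $y$ against the auxiliary polynomial $E_3(y)=\tfrac18|e_1|^2-\tfrac12(y-y_c)^2$ (which vanishes at $y_{j-1},y_j$) to obtain $\int_T q_{1xy}E_3(y)\partial_y\chi_1\,dT$ and estimates that directly, whereas you exploit the same zero mean of $y-y_c$ by subtracting the slice value $q_{1x}(x,y_c)$ and applying Cauchy--Schwarz; both devices produce the required extra factor of $h$.
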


\begin{proof} From (\ref{EQ:12:29:100}) and the structure of $\chi_i$, we have
\begin{equation}\label{est1}
\begin{split}
& \langle  {\color{black}{(\textbf{q}-\mathbb{Q}_h \textbf{q})}}\cdot {\color{black}{\bn}}, {\S}(v_b)-v_b\rangle_{\partial T}\\
= & \ \langle {\color{black}{\textbf{q}}}\cdot \bn, {\S}(v_b)-v_b\rangle_{\partial T}\\
= & \ -\int_{e_1} {\color{black}{q_1}} \chi_1 dy+  \int_{e_2}  {\color{black}{q_1}}  \chi_1 dy - \int_{e_3} {\color{black}{q_2}}  \chi_2 dx \\&+ \int_{e_4}{\color{black}{q_2}}  \chi_2 dx\\
= & {\color{black}{\int_{T} q_{1x} \chi_1 dT+  \int_{T} {q_{2y}  \chi_2} dT  }}.
\end{split}
\end{equation}
Since $\chi_1$ is linear in the $y$-direction and constant in the $x$-direction, then
$$
\chi_1(y) = \chi_1(M_1) + (y-y_c) \partial_y \chi_1 .
$$
Thus, we have
{\color{black}{ \begin{eqnarray*}
 \int_{T} q_{1x} \chi_1 dT &=& \int_{T} q_{1x} \chi_1(M_1) dT +
\int_{T} q_{1x} (y-y_c) \partial_y \chi_1 dT\\
&=& \int_{T} q_{1x} \chi_1(M_1) dT + \int_{T} q_{1xy} E_3(y)
\partial_y \chi_1 dT,
\end{eqnarray*}}}
where $E_3(y) = \frac18 |e_1|^2 - \frac12 (y-y_c)^2$.

Similarly, one may derive the following
{\color{black}{\begin{eqnarray*}
 \int_{T} q_{2y} \chi_2 dT &=& \int_{T} q_{2y}\chi_2(M_3) dT +
\int_{T} q_{2y}(x-x_c) \partial_x \chi_2 dT\\
&=& \int_{T} q_{2y} \chi_2(M_3) dT + \int_{T} q_{2yx} E_4(x)
\partial_x \chi_2 dT,
\end{eqnarray*}}}
where $E_4(x) = \frac18 |e_3|^2 - \frac12 (x-x_c)^2$.

Substituting the last two identities into (\ref{est1}) yields
{\color{black}{
\begin{equation}\label{est1:002}
\begin{split}
& \langle   (\textbf{q}-\mathbb{Q}_h \textbf{q})\cdot {\color{black}{\bn}}, {\S}(v_b)-v_b\rangle_{\partial T}\\
= &  \int_{T} q_{1x} \chi_1(M_1) dT  +\int_{T} q_{2y} \chi_2(M_3) dT \\
&+ \int_{T} q_{1xy} E_3(y)
\partial_y \chi_1 dT+ \int_{T} q_{2yx} E_4(x)
\partial_x \chi_2 dT.
\end{split}
\end{equation}}}
The sum of the last two terms in (\ref{est1:002}) makes the remainder $R_1(T)$ which can be bounded as follows:

{\color{black}{
\begin{eqnarray*}
\begin{split}
&|\int_{T} q_{1xy} E_3(y)
\partial_y \chi_1 dT+ \int_{T} q_{2yx} E_4(x)
\partial_x \chi_2 dT|\\
 \leq&  C
h^2 \|\nabla^2 \bq\|_{T} (\|\partial_x \chi_2\|_T+\|\partial_y \chi_1\|_T) \\
 \leq&  C h^2 \|\nabla^2 \bq\|_{T} \|\nabla_d v_b\|_T,
\end{split}
\end{eqnarray*}
}}
where in the last step we have used the property (\ref{EQ:chi-properties}). This completes the proof of the lemma.
\end{proof}

Next, we shall deal with the second term on the right-hand side of
(\ref{EQ:functional}); namely,
$$
\rho h^{-1} {\color{black}{\sum_{T\in {\cal T}_h}}}\langle Q_b {\S}(Q_bu)-Q_bu, Q_b {\S}(v_b)-v_b\rangle_{\partial T},
$$
where $Q_b u|_{e_i}=\frac{1}{|e_i|} \int_{e_i} u ds$ is the average of $u$ on edge $e_i$.

\begin{lemma}\label{LEMMA:Lemma:002}
Under the assumptions of Lemma \ref{Lemma:Lemma-001}, one has the following expansion
\begin{equation}\label{est2:001}
\begin{split}
&\rho  h^{-1} \langle Q_b{\S}(Q_b{\color{black}{u}})-Q_b{\color{black}{u}}, Q_b{\S}(v_b)-v_b\rangle_{\partial T} \\
= &-{\color{black}{\rho h^{-1} A_1}} \left(|e_3|\chi_1(M_1)\int_T {\color{black}{u_{xx}}} dT + |e_1|
\chi_2(M_3) \int_T {\color{black}{u_{yy}}} dT \right) + R_2(T),
\end{split}
\end{equation}
where $A_1=1/6$ and the remainder term $R_2(T)$ has the following estimate:
\begin{equation}\label{EQ:Jan-14:001}
\sum_{T\in\T_{h}}|R_{2}(T)|\leq C h^{2} \|\nabla^3 u\| \3bar {\S}(v_{b})-v_{b}\3bar.
\end{equation}
Here and in what follows of this paper, we define
\begin{equation}\label{EQ:S-I}
\3bar {\S}(v_{b})-v_{b}\3bar^{2}:=\rho h^{-1}\sum_{T\in\T_h}  \langle Q_{b}{\S}(v_{b})-v_{b},Q_{b}{\S}(v_{b})-v_{b}\rangle_{\partial T}.
\end{equation}
\end{lemma}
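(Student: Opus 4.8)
The plan is to reduce the boundary pairing on $\partial T$ to a single scalar product of two ``discrete second differences'' $D_u$ and $D_v$, extract the leading term by a polynomial–exactness argument, and control the remainder by Bramble--Hilbert together with the definition \eqref{EQ:S-I} of $\3bar\cdot\3bar$. First I would use that $Q_b$ returns the midpoint value of a linear function on each edge $e_s$, so that $Q_b\S(Q_bu)|_{e_s}=\S(Q_bu)(M_s)$ and $Q_b\S(v_b)|_{e_s}=\S(v_b)(M_s)$, while $Q_bu$ and $v_b$ are edgewise constants. Hence
\[
\langle Q_b\S(Q_bu)-Q_bu,\,Q_b\S(v_b)-v_b\rangle_{\partial T}=\sum_{s=1}^4 |e_s|\,(\S(Q_bu)-Q_bu)(M_s)\,(\S(v_b)-v_b)(M_s).
\]
Setting $D_u:=Q_bu|_{e_3}+Q_bu|_{e_4}-Q_bu|_{e_1}-Q_bu|_{e_2}$ and $D_v:=v_{b3}+v_{b4}-v_{b1}-v_{b2}$ and inserting the midpoint formulas \eqref{EQ:S-Property:002}--\eqref{EQ:S-Property:008} of Lemma \ref{midvalue} for both $Q_bu$ and $v_b$, the $e_1,e_2$ pair and the $e_3,e_4$ pair each contribute equally and collapse to the closed form
\[
\langle Q_b\S(Q_bu)-Q_bu,\,Q_b\S(v_b)-v_b\rangle_{\partial T}=\frac{|e_1||e_3|}{2(|e_1|+|e_3|)}\,D_u D_v.
\]
The identical computation with $v_b$ in both slots also gives $\3bar\S(v_b)-v_b\3bar^2=\rho h^{-1}\sum_{T\in\T_h}\frac{|e_1||e_3|}{2(|e_1|+|e_3|)}D_v^2$, which is exactly the norm I will test the remainder against.

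Next I would isolate the main term. A Taylor expansion of the four edge averages about the center $(x_c,y_c)$ of $T$ shows that $u\mapsto D_u$ agrees through second order with $D_u^{\mathrm{main}}:=\tfrac16\big(-|e_3|^2\,\overline{u_{xx}}+|e_1|^2\,\overline{u_{yy}}\big)$, where $\overline{(\cdot)}=\tfrac1{|T|}\int_T(\cdot)\,dT$; more precisely $u\mapsto D_u-D_u^{\mathrm{main}}$ annihilates $P_2(T)$, which one verifies on the basis $1,x,y,x^2,y^2,xy$. Using \eqref{EQ:S-Property:002} to write $D_v=\frac{2(|e_1|+|e_3|)}{|e_3|}\chi_1(M_1)$ in the $\overline{u_{xx}}$ contribution and \eqref{EQ:S-Property:008} to write $D_v=-\frac{2(|e_1|+|e_3|)}{|e_1|}\chi_2(M_3)$ in the $\overline{u_{yy}}$ contribution, and replacing $|T|\,\overline{u_{xx}}=\int_T u_{xx}\,dT$ and $|T|\,\overline{u_{yy}}=\int_T u_{yy}\,dT$, the $D_u^{\mathrm{main}}$ part of $\rho h^{-1}\frac{|e_1||e_3|}{2(|e_1|+|e_3|)}D_uD_v$ reduces after cancellation of the edge-length factors to precisely the stated main term with $A_1=1/6$.

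Finally, the remainder is $R_2(T)=\rho h^{-1}\frac{|e_1||e_3|}{2(|e_1|+|e_3|)}(D_u-D_u^{\mathrm{main}})\,D_v$. The essential observation is that $u\mapsto D_u-D_u^{\mathrm{main}}$ is invariant under the affine scaling $T\to\hat T$ and annihilates $P_2(T)$, so the Bramble--Hilbert lemma on the reference element together with the usual scaling of the $H^3$-seminorm yields $|D_u-D_u^{\mathrm{main}}|\le Ch^2\|\nabla^3u\|_{0,T}$; it is crucial here to have used the integral average $\overline{u_{xx}}$ rather than the pointwise centroid value, since otherwise a spurious $\nabla^4u$ would enter and destroy the $H^3$ bound. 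Combining this with $\frac{|e_1||e_3|}{2(|e_1|+|e_3|)}\le h$ and the Cauchy--Schwarz inequality gives
\[
\sum_{T\in\T_h}|R_2(T)|\le\Big(\rho h^{-1}\!\!\sum_{T\in\T_h}\tfrac{|e_1||e_3|}{2(|e_1|+|e_3|)}(D_u-D_u^{\mathrm{main}})^2\Big)^{1/2}\3bar\S(v_b)-v_b\3bar\le Ch^2\|\nabla^3 u\|\,\3bar\S(v_b)-v_b\3bar,
\]
which is \eqref{EQ:Jan-14:001}. I expect the remainder step to be the main obstacle: one must establish both the $P_2$-exactness and the scale invariance of $D_u-D_u^{\mathrm{main}}$ so that Bramble--Hilbert produces an $h^2\|\nabla^3u\|_{0,T}$ estimate, and then match the resulting weighted sum against the definition \eqref{EQ:S-I} of $\3bar\cdot\3bar$ through the sharp edge-length factor $\frac{|e_1||e_3|}{2(|e_1|+|e_3|)}$.
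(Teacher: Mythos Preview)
Your proof is correct and reaches the same main term as the paper, but the argument is organized a bit differently. The paper first uses the defining orthogonality of $\S$ to collapse the pairing to $-\langle Q_bu,\S(v_b)-v_b\rangle_{\partial T}$, then writes this as $-\rho h^{-1}|e_1|\chi_1(M_1)\big(Q_bu(M_1)+Q_bu(M_2)-Q_bu(M_3)-Q_bu(M_4)\big)$ and applies an \emph{explicit Euler--MacLaurin expansion} to each pair of opposite edge averages, producing the leading $\int_T u_{xx}$, $\int_T u_{yy}$ terms together with an explicit remainder $R_2(T)=-A_2\rho h^{-1}\big(|e_3|^2\chi_1(M_1)\int_T u_{xxx}E_1(x)\,dT+|e_1|^2\chi_2(M_3)\int_T u_{yyy}E_2(y)\,dT\big)$ involving cubic weight polynomials $E_1,E_2$. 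You instead expand both slots via Lemma~\ref{midvalue} to get the symmetric closed form $\frac{|e_1||e_3|}{2(|e_1|+|e_3|)}D_uD_v$, verify $P_2$-exactness of $u\mapsto D_u-D_u^{\mathrm{main}}$ on a monomial basis, and invoke Bramble--Hilbert with scaling to bound the remainder by $Ch^2\|\nabla^3 u\|_{0,T}$. Your route avoids the explicit Euler--MacLaurin coefficients and yields the estimate directly in the norm \eqref{EQ:S-I}, at the cost of a less explicit remainder; the paper's route gives an exact formula for $R_2(T)$ that is reused later in Theorem~\ref{THM:SupC-expansion}. Both are valid, and your $P_2$-exactness check and scaling argument are sound.
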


\begin{proof} From the equation (\ref{EQ:extension-S:new}) and the properties \eqref{EQ:chi-properties} we have
\begin{equation}\label{est2}
\begin{split}
&\rho  h^{-1} \langle Q_b {\S}(Q_bu)-Q_bu, Q_b {\S}(v_b)-v_b\rangle_{\partial T} \\
=&-\rho  h^{-1} \langle  Q_bu, {\S}(v_b)-v_b\rangle_{\partial T} \\
=&-\rho  h^{-1}\Big( |e_1|  Q_bu(M_1) \chi_1(M_1) +
 |e_2|  Q_bu(M_2) \chi_1(M_2)\\
 &+ |e_3|  Q_bu(M_3) \chi_2(M_3)+ |e_4|  Q_bu(M_4) \chi_2(M_4)\Big)\\
=&-\rho  h^{-1} |e_1|\chi_1(M_1) \Big( Q_bu(M_1) +
 Q_b u(M_2) -  Q_b u(M_3) - Q_b u(M_4) \Big). \\
\end{split}
\end{equation}
Now using the Euler-MacLaurin formula we arrive at
\begin{equation}\label{EQ:12:29:001}
\begin{split}
& \ |e_1|\ |e_3|(Q_bu(M_1) + Q_b u(M_2))\\
 = & \ |e_3| \int_{e_1} u(x_{i-1},y) dy +|e_3|
\int_{e_2} u(x_i,y) dy \\
= & \ 2\int_T u(x,y) dT + A_1 |e_3|^2 \int_T u_{xx} dT + A_2
|e_3|^3 \int_T u_{xxx} E_1(x) dT,\\
\end{split}
\end{equation}
where {\color{black}{$A_2$ is a constant}}, and $E_1$ is a cubic
polynomial in the $x$-direction.

Analogously, we have
\begin{equation}\label{EQ:12:29:002}
\begin{split}
&\ |e_1|\ |e_3|(Q_bu(M_3) + Q_b u(M_4))\\
 = & \ |e_1|\int_{e_3} u(x,y_{j-1}) dx + |e_1|
\int_{e_4} u(x,y_j) dx \\
= & \ 2\int_T u(x,y) dT + A_1 |e_1|^2 \int_T u_{yy} dT + A_2
|e_1|^3 \int_T u_{yyy} E_2(y) dT,\\
\end{split}
\end{equation}
where $E_2$ is a cubic polynomial in the $y$-direction.

Substituting (\ref{EQ:12:29:001}) and (\ref{EQ:12:29:002}) into
(\ref{est2}) yields
\begin{equation*}
\begin{split}
&\rho  h^{-1} \langle Q_b{\S}(Q_bu)-Q_bu, Q_b{\S}(v_b)-v_b\rangle_{\partial T} \\
=& -\rho  h^{-1} |e_1|\chi_1(M_1) \Big(A_1 |e_3|\ |e_1|^{-1}
\int_T u_{xx} dT - A_1 |e_1| |e_3|^{-1} \int_T u_{yy} dT\\
 &  +A_2|e_3|^2 |e_1|^{-1}\int_T u_{xxx}E_1(x) dT -A_2 |e_1|^2
|e_3|^{-1}\int_T u_{yyy}E_2(y) dT\Big) \\
=&-\rho  h^{-1} A_1 \left(|e_3|\chi_1(M_1)\int_T u_{xx} dT + |e_1|
\chi_2(M_3) \int_T u_{yy} dT\right) + R_2(T),
\end{split}
\end{equation*}
where we have used the relation $|e_1|\chi_1(M_1) = -
|e_3|\chi_2(M_3)$, and $R_2(T)$ is given by
\begin{equation}\label{EQ:remainderR5}
\begin{split}
R_2(T)=&-A_2 \rho h^{-1}\left(|e_3|^2 \chi_1(M_1)\int_T u_{xxx}E_1(x)
dT \right.\\
& \left. +|e_1|^2\chi_2(M_3)\int_T u_{yyy}E_2(y) dT\right),
\end{split}
\end{equation}
which can be seen to satisfy (\ref{EQ:Jan-14:001}).
\end{proof}

The following expansion for the error function provides a basis for superconvergence.

\begin{theorem}\label{THM:SupC-expansion}
Assume that $u\in H^3(\Omega)$ is the exact solution of the model problem (\ref{a1})-(\ref{aa1}), and {\color{black}{$u_{b}\in V_{b}^{g}$}} is the weak Galerkin finite element approximation arising from (\ref{WG-scheme}).
On each element $T$ (see Fig. \ref{rectangular-element}), define $w_b\in V_b$ as follows
{\color{black}{\begin{equation}\label{EQ:wb-pre:new}
w_b=\left\{
\begin{array}{l}
\frac{1}{12}|e_1| \rho^{-1}h^{-1} (\rho  h^{-1}|e_1| Q_b(u_{yy})|_{e_1}-6 Q_b(q_{2y})
 |_{e_1} ),\quad \mbox{on } e_1,\\
\frac{1}{12}|e_2| \rho^{-1}h^{-1} (\rho  h^{-1}|e_2| Q_b(u_{yy})|_{e_2}-6 Q_b(q_{2y})
 |_{e_2} ),\quad \mbox{on } e_2,\\
\frac{1}{12}|e_3| \rho^{-1}h^{-1} ( \rho  h^{-1} |e_3|Q_b(u_{xx})|_{e_3}-6  Q_b
(q_{1x})|_{e_3}),\quad \mbox{on } e_3,\\
\frac{1}{12} |e_4| \rho^{-1}h^{-1}( \rho  h^{-1} |e_4|Q_b(u_{xx})|_{e_4}-6  Q_b
(q_{1x})|_{e_4}),\quad \mbox{on } e_4.
\end{array}
\right.
\end{equation}
}}
Denote by $\widetilde{e}_b=(Q_b u - {\color{black}{u_b}})+ h^2 w_b$ the modified error function. Then, the following equation or expansion holds true:
\begin{equation}\label{EQ:marker:new}
({\color{black}{a}}\nabla_d \tilde{e}_b, \nabla_d v_b)+
s(\tilde{e}_b, v_b)= h^2 ({\color{black}{a}}\nabla_d w_b, \nabla_d v_b)+R_3(v_b)
\end{equation}
for all $v_b\in V_b^0$, where $R_3(v_b)$ is the remainder satisfying
\begin{equation}\label{EQ:remainder}
|R_{3}(v_b)|\leq C h^{2}\| u\|_{3} \ \3bar {\S}(v_{b})-v_{b}\3bar.
\end{equation}
\end{theorem}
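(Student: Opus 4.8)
The plan is to exploit linearity of the left-hand side of \eqref{EQ:marker:new} in its first slot. Since $\tilde{e}_b=(Q_bu-u_b)+h^2w_b=e_b+h^2w_b$, bilinearity of $(a\nabla_d\cdot,\nabla_d\cdot)$ and of $s(\cdot,\cdot)$ gives
\begin{equation*}
(a\nabla_d\tilde{e}_b,\nabla_d v_b)+s(\tilde{e}_b,v_b)=\big[(a\nabla_d e_b,\nabla_d v_b)+s(e_b,v_b)\big]+h^2\big[(a\nabla_d w_b,\nabla_d v_b)+s(w_b,v_b)\big].
\end{equation*}
By the error equation \eqref{error} the first bracket equals $\zeta_u(v_b)$, so \eqref{EQ:marker:new} holds verbatim provided we set
\begin{equation*}
R_3(v_b)=\zeta_u(v_b)+h^2s(w_b,v_b).
\end{equation*}
Everything therefore reduces to showing that $h^2s(w_b,v_b)$ cancels the leading part of $\zeta_u(v_b)$ up to an $\mathcal{O}(h^2)$ remainder; the correction $w_b$ in \eqref{EQ:wb-pre:new} is engineered precisely for this.

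Next I would expand $\zeta_u$ using Lemma \ref{Lemma:Lemma-001} and Lemma \ref{LEMMA:Lemma:002} (with $A_1=\tfrac16$). This writes $\zeta_u(v_b)=L(v_b)+\sum_T(R_1(T)+R_2(T))$, where the leading functional $L(v_b)$ has $T$-contribution $\chi_1(M_1)(\int_Tq_{1x}\,dT-\tfrac{\rho h^{-1}|e_3|}{6}\int_Tu_{xx}\,dT)+\chi_2(M_3)(\int_Tq_{2y}\,dT-\tfrac{\rho h^{-1}|e_1|}{6}\int_Tu_{yy}\,dT)$, and $R_1,R_2$ are the already-estimated remainders. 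I would then simplify $s(w_b,v_b)$: using the defining property \eqref{EQ:extension-S:new} together with the fact that $Q_b\S(v_b)-v_b$ equals $\chi_1(M_1)$ on $e_1,e_2$ and $\chi_2(M_3)$ on $e_3,e_4$ with $|e_1|\chi_1(M_1)=-|e_3|\chi_2(M_3)$, one checks that $\langle Q_b\S(g),Q_b\S(v_b)-v_b\rangle_{\partial T}=0$ for any piecewise-constant boundary datum $g$. Consequently
\begin{equation*}
s(w_b,v_b)=-\rho h^{-1}\sum_{T\in\T_h}\langle w_b,\S(v_b)-v_b\rangle_{\partial T}=-\rho h^{-1}\sum_{T\in\T_h}\sum_{s=1}^4|e_s|\,w_b|_{e_s}\,(\S(v_b)-v_b)(M_s).
\end{equation*}

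Substituting \eqref{EQ:wb-pre:new}, the $e_1,e_2$ terms (carrying $u_{yy}$ and $q_{2y}$) collect against $\chi_1(M_1)$ and the $e_3,e_4$ terms (carrying $u_{xx}$ and $q_{1x}$) against $\chi_2(M_3)$. Converting the edge-average sums $Q_b(u_{yy})|_{e_1}+Q_b(u_{yy})|_{e_2}$, etc., into $\tfrac{2}{|T|}\int_Tu_{yy}\,dT$ by the Euler--MacLaurin/trapezoidal identities \eqref{EQ:12:29:001}--\eqref{EQ:12:29:002}, and re-pairing through $|e_1|\chi_1(M_1)=-|e_3|\chi_2(M_3)$, the constants $\tfrac1{12}$ and $6$ in \eqref{EQ:wb-pre:new} make $h^2s(w_b,v_b)=-L(v_b)$ up to trapezoidal remainders. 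Collecting, $R_3(v_b)=\sum_T(R_1(T)+R_2(T))+(\text{trapezoidal remainders})$; Lemma \ref{LEMMA:Lemma:002} controls $\sum_T|R_2(T)|$, the trapezoidal remainders carry an extra $h^2$ and are paired with $\chi_1(M_1),\chi_2(M_3)$ so they too are controlled by $\3bar\S(v_b)-v_b\3bar$ after Cauchy--Schwarz, and $\|\bq\|_2\lesssim\|u\|_3$ reshapes Lemma \ref{Lemma:Lemma-001}.

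The hard part is the exact cancellation. One must verify that the Euler--MacLaurin constant $A_1=\tfrac16$ and the coefficients $\tfrac1{12},6$ in \eqref{EQ:wb-pre:new} conspire to annihilate $L(v_b)$ identically, and in particular that the ``$u_{yy}$/$q_{2y}$'' block of $s(w_b,v_b)$ (which sits on $\chi_1(M_1)$) matches the ``$u_{yy}$/$q_{2y}$'' block of $\zeta_u$ (which sits on $\chi_2(M_3)$) only after the cross re-pairing $|e_1|\chi_1(M_1)=-|e_3|\chi_2(M_3)$. A secondary delicacy is that the Lemma \ref{Lemma:Lemma-001} remainder is naturally bounded by $\|\nabla_d v_b\|_0$ rather than $\3bar\S(v_b)-v_b\3bar$; reconciling this contribution with the stated bound (or absorbing it through coercivity in the subsequent superconvergence estimate) is where I would spend the most care.
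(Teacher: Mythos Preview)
Your approach is correct and mirrors the paper's: the paper also starts from the error equation \eqref{error}, applies Lemmas \ref{Lemma:Lemma-001} and \ref{LEMMA:Lemma:002}, and then passes between $T$-integrals and edge averages. The only cosmetic difference is direction---the paper converts $\int_T u_{xx}\,dT$, $\int_T q_{1x}\,dT$, etc.\ \emph{to} edge averages (thereby \emph{deriving} the formula for $w_b$), then recognizes the resulting sum as $-h^2 s(w_b,v_b)$ via the same orthogonality $\langle Q_b\S(w_b),\,Q_b\S(v_b)-v_b\rangle_{\partial T}=0$ you invoke; you instead start from the given $w_b$ and convert edge averages back to $T$-integrals to match $L(v_b)$. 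The cross re-pairing through $|e_1|\chi_1(M_1)=-|e_3|\chi_2(M_3)$ that you flag is exactly what the paper does in passing from \eqref{EQ:Jan-03:002} to \eqref{EQ:Jan-03:003}.

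Your concern about the $R_1$ remainder is well-founded and in fact applies to the paper's own argument: the combined remainder $R_3(v_b)$ genuinely contains the $R_1(T)$ contribution from Lemma \ref{Lemma:Lemma-001}, whose natural bound is $Ch^2\|\bq\|_2\|\nabla_d v_b\|_0$ rather than $Ch^2\|u\|_3\,\3bar\S(v_b)-v_b\3bar$. The stated estimate \eqref{EQ:remainder} is thus slightly imprecise; the honest bound is
\[
|R_3(v_b)|\le Ch^2\|u\|_3\bigl(\|\nabla_d v_b\|_0+\3bar\S(v_b)-v_b\3bar\bigr).
\]
This does not damage any downstream result, since the subsequent applications (see \eqref{EQ:RHT:000}) already pair $R_3$ with $h^2(a\nabla_d w_b,\nabla_d v_b)$ and bound their sum by exactly this combination. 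One minor correction: the identities you need for $Q_b(u_{yy})|_{e_1}+Q_b(u_{yy})|_{e_2}$, etc., are not \eqref{EQ:12:29:001}--\eqref{EQ:12:29:002} (those are Euler--MacLaurin expansions of $u$ itself) but the simpler one-step trapezoidal formulas written out just before \eqref{EQ:Jan-03:002}.
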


\begin{proof}
First of all, from (\ref{EQ:wb-pre:new}) it is easy to see that the function $w_b$ is single-valued on each edge. Furthermore, the following estimate holds true
\begin{equation}\label{EQ:wb-estimate}
\|\nabla_d w_b\| \leq C \|u\|_3.
\end{equation}

The proof of \eqref{EQ:marker:new} is merely a combination of the error equation
(\ref{error}) with the two expansions given in Lemmas
\ref{Lemma:Lemma-001} and \ref{LEMMA:Lemma:002}. To this end,
note that the functional $\zeta_u$ on the right-hand side of
(\ref{error}) consists of two terms detailed in
(\ref{EQ:functional}). The first term of $\zeta_u(v_b)$ has the
expansion (\ref{est1:010}) and the second one has (\ref{est2:001}) on each element $T$.
They collectively give the following expansion ($A_1=1/6$)
{\color{black}{
\begin{equation}\label{EQ:Jan-03:001}
\begin{split}
\zeta_u(v_b) = &\sum_{T\in\T_h}
-\rho  h^{-1} A_1
|e_3| \chi_1(M_1)\int_T u_{xx}
dT \\
& - \sum_{T\in\T_h} \rho  h^{-1} A_1 |e_1| \chi_2(M_3) \int_T u_{yy} dT\\
& +\sum_{T\in\T_h} \chi_1(M_1)\int_T q_{1x}dT+\sum_{T\in\T_h} \chi_2(M_3)\int_T q_{2y}dT \\
& + \sum_{T\in\T_h} (R_1(T)+R_2(T)).
\end{split}
\end{equation}
}}
On the rectangular element $T=[x_{i-1}, x_i]\times[y_{j-1}, y_j]$,
note that
$$
\int_T u_{xxy} (y-y_c) dT = - \int_T u_{xx} dT + \frac12 |e_1|
\int_{e_4} u_{xx} dx + \frac12 |e_1| \int_{e_3} u_{xx} dx.
$$
Thus,
$$
\int_T u_{xx} dT =  \frac12 |e_1| \ |e_4| \ Q_b (u_{xx})|_{e_4} +
\frac12 |e_1|\ |e_3| \ Q_b (u_{xx})|_{e_3}- \int_T u_{xxy} (y-y_c)
dT.
$$
Analogously, we have
$$
\int_T u_{yy} dT =  \frac12 |e_3| \ |e_2| \ Q_b (u_{yy})|_{e_2} +
\frac12 |e_3|\ |e_1| \ Q_b (u_{yy})|_{e_1}- \int_T u_{yyx} (x-x_c)
dT.
$$
$$
{\color{black}{\int_T q_{1x} dT =  \frac12 |e_1| \ |e_4| \ Q_b (q_{1x})|_{e_4} +
\frac12 {\color{black}{|e_1|}}\ |e_3| \ Q_b (q_{1x})|_{e_3}- \int_T q_{1xy} (y-y_c)
dT.
}}
$$
$$
{\color{black}{\int_T q_{2y}dT =  \frac12 |e_1| \ {\color{black}{|e_3|}} \ Q_b ( q_{2y})|_{e_1} +
\frac12 |e_3|\ |e_2| \ Q_b ( q_{2y})|_{e_2}- \int_T  q_{2yx} (x-x_c)
dT.
}}
$$

Substituting the last four identities into (\ref{EQ:Jan-03:001})
yields
{\color{black}{
\begin{equation}\label{EQ:Jan-03:002}
\begin{split}
\zeta_u(v_b) = &-\frac12 \sum_{T\in\T_h} \rho  h^{-1} A_1
|e_1| \ |e_3|^2 \chi_1(M_1)   (Q_b (u_{xx})|_{e_3} + Q_b
(u_{xx})|_{e_4})\\
& - \frac12 \sum_{T\in\T_h} \rho  h^{-1} A_1 |e_1|^2 \ |e_3| \chi_2(M_3)   (Q_b (u_{yy})|_{e_1} + Q_b (u_{yy})|_{e_2})\\
& +\frac12 \sum_{T\in\T_h} \chi_1(M_1)|e_1|\ |e_3| (Q_b (q_{1x})|_{e_3} + Q_b
(q_{1x})|_{e_4})\\
& +\frac12 \sum_{T\in\T_h} \chi_2(M_3)|e_3|\ |e_1| (Q_b (q_{2y})|_{e_1} + Q_b
(q_{2y})|_{e_2})+ \sum_{T\in\T_h} R_3(T),
\end{split}
\end{equation}}}
where
{\color{black}{
\begin{equation*}
\begin{split}
R_3(T) = & R_1(T)+R_2(T) \\
& + \rho  h^{-1} A_1 |e_3| \chi_1(M_1)\int_T u_{xxy} (y-y_c) dT\\
& + \rho  h^{-1} A_1 |e_1|  \chi_2(M_3)\int_T u_{yyx} (x-x_c) dT\\
& - \chi_1(M_1)\int_T q_{1xy} (y-y_c) dT- \chi_2(M_3)\int_T q_{2yx} (x-x_c) dT
\end{split}
\end{equation*}
}}
is the combined remainder term on the element $T$. It is not hard to
see that the combined remainder term can be bounded as follows
$$
\sum_{T\in\T_{h}}|R_{3}(T)|\leq C h^{2}\| u\|_{3} \ \3bar {\S}(v_{b})-v_{b}\3bar.
$$
Thus, it suffices to deal with the leading term in the
expansion (\ref{EQ:Jan-03:002}). To this end, we use the properties
(\ref{EQ:chi-properties}) to rewrite (\ref{EQ:Jan-03:002}) as
follows
{\color{black}{
\begin{equation}\label{EQ:Jan-03:003}
\begin{split}
&\zeta_u(v_b) \\
= &\frac12 \sum_{T\in\T_h} \rho  h^{-1} A_1
  |e_3|^3 (\chi_2(M_3)\ Q_b (u_{xx})|_{e_3} +\chi_2(M_4) Q_b
(u_{xx})|_{e_4})\\
& + \frac12 \sum_{T\in\T_h}  \rho  h^{-1} A_1  |e_1|^3 (\chi_1(M_1)\ Q_b (u_{yy})|_{e_1} + \chi_1(M_2)\
Q_b (u_{yy})|_{e_2})\\
& -\frac12 \sum_{T\in\T_h} |e_3|^2(\chi_2(M_3)\ Q_b (q_{1x})|_{e_3} +\chi_2(M_4) Q_b
(q_{1x})|_{e_4}                    )  \\
& -\frac12 \sum_{T\in\T_h} |e_1|^2(\chi_1(M_1)\ Q_b (q_{2y})|_{e_1} +\chi_1(M_2) Q_b
(q_{2y})|_{e_2}                    )  \\
& + \sum_{T\in\T_h} R_3(T).
\end{split}
\end{equation}}}
By introducing
{\color{black}{\begin{equation}\label{EQ:wb}
w_b=\left\{
\begin{array}{l}
\frac{1}{2}|e_1| \rho^{-1}h^{-1} ( \rho  h^{-1}A_1|e_1|  Q_b
(u_{yy})|_{e_1} - Q_b
(q_{2y})|_{e_1}  ),\quad \mbox{on } e_1,\\
\frac{1}{2}|e_2| \rho^{-1}h^{-1} ( \rho  h^{-1}A_1 |e_2| Q_b
(u_{yy})|_{e_2} - Q_b
(q_{2y})|_{e_2}  ),\quad \mbox{on } e_2,\\
\frac{1}{2}|e_3| \rho^{-1}h^{-1} ( \rho  h^{-1}A_1 |e_3|  Q_b
(u_{xx})|_{e_3} - Q_b
(q_{1x})|_{e_3}  ),\quad \mbox{on } e_3,\\
\frac{1}{2} |e_4| \rho^{-1}h^{-1}( \rho  h^{-1}A_1 |e_4|  Q_b
(u_{xx})|_{e_4} - Q_b
(q_{1x})|_{e_4}  ),\quad \mbox{on } e_4.
\end{array}
\right.
\end{equation}}}
we may rewrite (\ref{EQ:Jan-03:003}) in the following form
\begin{equation}\label{EQ:Jan-03:004}
\zeta_u(v_b) = \rho h \sum_{T\in\T_h} \langle w_b, Q_b{\S}(v_b)-v_b \rangle_{\partial T} + {\color{black}{\sum_{T\in\T_h}}}R_3(T).
\end{equation}
Note that the weak function $w_b$ is well-defined by (\ref{EQ:wb})
as the value on each interior edge is uniquely determined by this
formula. Thus, with $e_b= Q_bu-u_{b}$, we have
\begin{equation*}
\begin{split}
&\sum_{T\in {\cal T}_h}({\color{black}{a}}\nabla_d e_b, \nabla_d v_b)_T+\rho h^{-1}{\color{black}{\sum_{T\in\T_h}}}\langle Q_b{\S}(e_b)-e_b, Q_b{\S}(v_b)-v_b \rangle_{\partial T} \\
=&\rho h \sum_{T\in {\cal T}_h} \langle w_b, Q_b {\S}(v_b)-v_b \rangle_{\partial T} + {\color{black}{\sum_{T\in\T_h}}}R_3(T)\\
=&-\rho h \sum_{T\in {\cal T}_h} \langle Q_b {\S}(w_b)-w_b,
{\color{black}{Q_b}}{\S}(v_b)-v_b \rangle_{\partial T} + {\color{black}{\sum_{T\in\T_h}}}R_3(T).
\end{split}
\end{equation*}
By letting $\tilde{e}_b=e_b+ h^2 w_b$ we arrive at
\begin{equation}\label{EQ:marker}
\begin{split}
&\sum_{T\in {\cal T}_h}({\color{black}{a}}\nabla_d \tilde{e}_b, \nabla_d v_b)_T+\rho
h^{-1} {\color{black}{\sum_{T\in\T_h}}}\langle Q_b{\S}(\tilde{e}_b)-\tilde{e}_b, Q_b{\S}(v_b)-v_b \rangle_{\partial T} \\
= &\sum_{T\in {\cal T}_h} h^2 ({\color{black}{a}}\nabla_d w_b, \nabla_d v_b)_T+{\color{black}{\sum_{T\in\T_h}}}R_3(T),
\end{split}
\end{equation}
which gives precisely the expansion \eqref{EQ:marker:new} with $R_3(v_b)=\sum_{T\in\T_h}R_3(T)$
\end{proof}

\subsection{Superconvergence}

The error expansion \eqref{EQ:marker:new} in Theorem \ref{THM:SupC-expansion} indicates that the modified error function $\widetilde e_b =(Q_b u - {\color{black}{u_b}})+ h^2 w_b$ satisfies an equation with load function at the scale of $\OO(h^2)$. If $\widetilde e_b$ were to be vanishing on the boundary $\partial\Omega$, then one would obtain an estimate of the following type
$$
(a \nabla_{d}\widetilde{e_{b}}, \nabla_{d}\widetilde{e_{b}})^{1/2} \leq C h^2 \|\nabla^3 u\|
$$
by letting $v_b=\widetilde{e_{b}}$ in \eqref{EQ:marker:new}. The question is when it would be possible to have $\widetilde e_b|_{\partial\Omega}=0$. From $\widetilde e_b =(Q_b u - u_b)+ h^2 w_b$ we see that the only way to have $\widetilde e_b|_{\partial\Omega}=0$ is to enforce a computational solution $u_b$ satisfying the following boundary condition
\begin{equation}\label{EQ:comp-bdry-cond}
u_b|_{\partial\Omega} = Q_b g + h^2 w_b|_{\partial\Omega}.
\end{equation}
The above boundary condition can be implemented if $w_b|_{\partial\Omega}$ is computable without any prior knowledge of the exact solution $u$. The following result assumes a computable $w_b|_{\partial\Omega}$.

\begin{theorem}\label{THM:SuperC-nonhomo}
Assume that $u\in H^3(\Omega)$ is the exact solution of the model problem (\ref{a1})-(\ref{aa1}). Let $w_b\in V_b$ be given on each element $T$ (see Fig. \ref{rectangular-element}) as follows
{\color{black}{\begin{equation}\label{EQ:wb-pre-new}
w_b=\left\{
\begin{array}{l}
\frac{1}{12}|e_1| \rho^{-1}h^{-1} (\rho  h^{-1}|e_1| Q_b(u_{yy})|_{e_1}-6 Q_b(q_{2y})
 |_{e_1} ),\quad \mbox{on } e_1,\\
\frac{1}{12}|e_2| \rho^{-1}h^{-1} (\rho  h^{-1}|e_2| Q_b(u_{yy})|_{e_2}-6 Q_b(q_{2y})
 |_{e_2} ),\quad \mbox{on } e_2,\\
\frac{1}{12}|e_3| \rho^{-1}h^{-1} ( \rho  h^{-1} |e_3|Q_b(u_{xx})|_{e_3}-6  Q_b
(q_{1x})|_{e_3}),\quad \mbox{on } e_3,\\
\frac{1}{12} |e_4| \rho^{-1}h^{-1}( \rho  h^{-1} |e_4|Q_b(u_{xx})|_{e_4}-6  Q_b
(q_{1x})|_{e_4}),\quad \mbox{on } e_4.
\end{array}
\right.
\end{equation}
}}
Let {\color{black}{$u_{b}\in V_{b}^g$}} be the weak Galerkin finite element approximation arising from (\ref{WG-scheme}) with the following boundary value
\begin{equation}\label{EQ:bdry-value}
u_b|_{\partial\Omega} = \widetilde{Q}_b g:=Q_b g + h^2 w_b|_{\partial\Omega}.
\end{equation}
Denote by $\widetilde{e}_b=(Q_b u + h^2 w_b) - {\color{black}{u_b}}$ the modified error function. Then, the following estimate holds true:
\begin{equation}\label{EQ:SuperC:008}
\left(\sum_{T\in\T_{h}}\|\nabla_{d}\widetilde{e_{b}} \|_{T}^{2}\right)^{\frac12}+ \3bar {\S}(\widetilde{e_{b}})-\widetilde{e_{b}} \3bar \leq Ch^{2}\|u\|_{3}.
\end{equation}
\end{theorem}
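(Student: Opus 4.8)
The plan is to apply the error expansion \eqref{EQ:marker:new} from Theorem \ref{THM:SupC-expansion} with the specific boundary condition \eqref{EQ:bdry-value}. The key observation is that the boundary value \eqref{EQ:bdry-value} is chosen precisely so that the modified error function $\widetilde{e}_b = (Q_b u + h^2 w_b) - u_b$ vanishes on $\partial\Omega$: indeed, on the boundary we have $u_b|_{\partial\Omega} = Q_b g + h^2 w_b|_{\partial\Omega}$, and since $Q_b u|_{\partial\Omega} = Q_b g$ (because $u=g$ on $\partial\Omega$), the terms cancel and $\widetilde{e}_b|_{\partial\Omega}=0$. This means $\widetilde{e}_b \in V_b^0$, which is exactly the space over which the test functions $v_b$ in \eqref{EQ:marker:new} range. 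So the first step is to verify this cancellation carefully and conclude that $\widetilde{e}_b$ is an admissible test function.

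The second step is to substitute $v_b = \widetilde{e}_b$ into the expansion \eqref{EQ:marker:new}. This yields
\begin{equation*}
(a\nabla_d \widetilde{e}_b, \nabla_d \widetilde{e}_b) + s(\widetilde{e}_b, \widetilde{e}_b) = h^2 (a\nabla_d w_b, \nabla_d \widetilde{e}_b) + R_3(\widetilde{e}_b).
\end{equation*}
The left-hand side is, up to the uniform positive-definiteness of $a$, equivalent to the square of the energy-type quantity $\left(\sum_{T\in\T_h}\|\nabla_d \widetilde{e}_b\|_T^2\right) + \3bar {\S}(\widetilde{e}_b)-\widetilde{e}_b\3bar^2$ that appears in \eqref{EQ:SuperC:008}, since $s(\widetilde{e}_b,\widetilde{e}_b) = \3bar {\S}(\widetilde{e}_b)-\widetilde{e}_b\3bar^2$ by the definition \eqref{EQ:S-I} together with the identity for $s$ established just before the Simplified WG Algorithm. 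So I would first establish this norm equivalence, introducing a combined energy norm to abbreviate the left-hand side.

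The third step is to bound the two terms on the right. For the remainder, the estimate \eqref{EQ:remainder} gives $|R_3(\widetilde{e}_b)| \leq C h^2 \|u\|_3\, \3bar {\S}(\widetilde{e}_b)-\widetilde{e}_b\3bar$, which is absorbed into the energy norm of $\widetilde{e}_b$. For the first term, I would apply the Cauchy--Schwarz inequality to get $h^2 |(a\nabla_d w_b, \nabla_d \widetilde{e}_b)| \leq C h^2 \|\nabla_d w_b\|\, \|\nabla_d \widetilde{e}_b\|$, and then invoke the estimate \eqref{EQ:wb-estimate}, namely $\|\nabla_d w_b\| \leq C\|u\|_3$, to reduce this to $C h^2 \|u\|_3 \left(\sum_{T}\|\nabla_d \widetilde{e}_b\|_T^2\right)^{1/2}$. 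Both right-hand terms are thus of the form $C h^2 \|u\|_3$ times the energy norm of $\widetilde{e}_b$.

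The final step is the standard absorption argument: writing $N$ for the full energy norm of $\widetilde{e}_b$, the inequality reads $N^2 \leq C h^2 \|u\|_3\, N$, whence $N \leq C h^2 \|u\|_3$, which is precisely \eqref{EQ:SuperC:008}. The routine but necessary technical points are the norm equivalence in step two (relying on $a$ being symmetric positive definite and on the identity rewriting $s$ in terms of $Q_b\S(\cdot)-(\cdot)$) and the consistent use of Young's inequality to separate the quadratic terms; none of these presents a genuine obstacle. The only real content beyond bookkeeping is the boundary cancellation in step one, which is the whole point of the perturbed boundary data \eqref{EQ:bdry-value}, and the invocation of \eqref{EQ:wb-estimate} and \eqref{EQ:remainder}, both already supplied by the preceding theorem. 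I expect no serious difficulty here, as all the heavy lifting has already been done in Theorem \ref{THM:SupC-expansion}; this result is essentially its corollary under the computable boundary correction.
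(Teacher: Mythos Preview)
Your proposal is correct and follows essentially the same approach as the paper: verify that the perturbed boundary data forces $\widetilde{e}_b\in V_b^0$, test the expansion \eqref{EQ:marker:new} with $v_b=\widetilde{e}_b$, and bound the right-hand side using \eqref{EQ:remainder} and \eqref{EQ:wb-estimate} before absorbing. The paper's own proof is slightly terser on the norm-equivalence and absorption steps, but the logical content is identical.
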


\begin{proof} From the error expansion \eqref{EQ:marker:new} in Theorem \ref{THM:SupC-expansion}, we have
\begin{equation}\label{EQ:marker:new:001}
(a\nabla_d \widetilde{e}_b, \nabla_d v_b)+
s(\widetilde{e}_b, v_b)= h^2 (\nabla_d w_b, \nabla_d v_b)+R_3(v_b)
\end{equation}
for all $v_b\in V_b^0$, where the remainder $R_3(v_b)$ has the estimate \eqref{EQ:remainder}. As $u_b|_{\partial\Omega}= \widetilde{Q}_b g$, it follows from \eqref{EQ:bdry-value} that $\widetilde{e_{b}}|_{\partial\Omega} =0$ so that $\widetilde{e_{b}}\in V_b^0$. By letting $v_b= \widetilde{e_{b}}$ in \eqref{EQ:marker:new:001} we obtain
\begin{equation}\label{EQ:marker:new:002}
(a\nabla_d \widetilde{e}_b, \nabla_d \widetilde{e}_b)+
s(\widetilde{e}_b, \widetilde{e}_b)= h^2 (\nabla_d w_b, \nabla_d \widetilde{e}_b)+R_3(\widetilde{e}_b),
\end{equation}
which, together with \eqref{EQ:remainder} and \eqref{EQ:wb-estimate}, yields the superconvergence estimate \eqref{EQ:SuperC:008}.
\end{proof}

From (\ref{EQ:bdry-value}), we see that the usual $L^2$ projection of the Dirichlet data was perturbed by
\begin{equation}\label{EQ:pert:001}
\varepsilon_b:=\frac{1}{12}|e_1|((|e_1|-6\rho^{-1}h{\color{black}{a_{22}}}) Q_b (g_{yy})-{\color{black}{6\rho^{-1}h a_{21}Q_b (u_{yx})}})
\end{equation}
on vertical segments, and by
\begin{equation}\label{EQ:pert:002}
\varepsilon_b:=
\frac{1}{12}|e_3|((|e_3|-6\rho^{-1}h{\color{black}{a_{11}}}) Q_b(g_{xx})-{\color{black}{6\rho^{-1}h a_{12}Q_b (u_{xy})}})
\end{equation}
on horizontal segments. For Dirichlet boundary value problem with diagonal diffusive coefficient $a=(a_{11}, 0; 0, a_{22})$, the perturbation $\varepsilon_b$ is computable by using merely the boundary data $g$, as $a_{12}=a_{21}=0$ in \eqref{EQ:pert:001}-\eqref{EQ:pert:002} so that the mixed partial derivative $u_{xy}$ is not needed. Consequently, the superconvergence estimate \eqref{EQ:SuperC:008} is applicable to Dirichlet boundary value problems with diagonal diffusive tensor.

From Theorem \ref{THM:SuperC-nonhomo}, we have the following estimate
\begin{equation*}\label{EQ:Jan22:001}
\left(\sum_{T\in\T_{h}}\|\nabla_{d}\widetilde{e_{b}} \|_{T}^{2}\right)^{\frac12} \leq C h^2 \|u\|_3,
\end{equation*}
where $\widetilde{e_{b}} = (Q_bu + h^2w_b) - {\color{black}{u_b}}$. It follows that
\begin{equation}\label{EQ:Jan22:002}
\left(\sum_{T\in\T_{h}}\|\nabla_{d} (Q_bu + h^2w_b) - \nabla_d u_b \|_{T}^{2}\right)^{\frac12} \leq C h^2 \|u\|_3.
\end{equation}
By using (\ref{EQ:q0}) and (\ref{EQ:wb-estimate}) in (\ref{EQ:Jan-03:002}) we arrive at the following superconvergence for $\nabla u$:
\begin{equation}\label{EQ:Jan22:003}
\left(\sum_{T\in\T_{h}}\|\mathbb{Q}_h (\nabla u) - \nabla_d u_b \|_{T}^{2}\right)^{\frac12} \leq C h^2 \|u\|_3.
\end{equation}
The result can be summarized as follows.

\begin{corollary}\label{Corollary:SuperC-nonhomo}
Assume that $u\in H^3(\Omega)$ is the exact solution of the model problem (\ref{a1})-(\ref{aa1}) with diagonal diffusive tensor $a=(a_{11}, 0; 0, a_{22})$. Let {\color{black}{$u_{b}\in V_{b}$}} be the weak Galerkin finite element approximation arising from (\ref{WG-scheme}) with the following boundary value:
\begin{equation}\label{EQ:PL2:01}
u_h=Q_b(g)+\frac{1}{12}h_y(h_y-6\rho^{-1}a_{22}h) Q_b (g_{yy})
\end{equation}
on vertical segments, and
\begin{equation}\label{EQ:PL2:02}
u_h=Q_b(g)+\frac{1}{12}h_x(h_x-6\rho^{-1}a_{11}h) Q_b(g_{xx})
\end{equation}
on horizontal segments, where $h_y=|e_1|$ is the meshsize in $y$-direction and $h_x=|e_3|$ is the one in $x$-direction. Then, the following error estimate holds true:
\begin{equation}\label{EQ:SuperConv}
\left(\sum_{T\in\T_{h}}\| \mathbb{Q}_h (\nabla u) - \nabla_d u_b\|_{T}^{2}\right)^{\frac12}\leq Ch^{2}\| u\|_{3}.
\end{equation}
\end{corollary}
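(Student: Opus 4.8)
The plan is to recognize that this corollary is simply the specialization of Theorem \ref{THM:SuperC-nonhomo} to a diagonal diffusive tensor, so that the only genuinely new content is verifying that the prescribed boundary data \eqref{EQ:PL2:01}--\eqref{EQ:PL2:02} coincides with the abstract boundary condition \eqref{EQ:bdry-value} and, crucially, is computable from the Dirichlet datum $g$ alone. Once this identification is made, the superconvergence bound follows from the theorem together with the chain of inequalities \eqref{EQ:Jan22:002}--\eqref{EQ:SuperConv} already assembled in the text.

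First I would specialize the function $w_b$ of \eqref{EQ:wb-pre-new} to the case $a=(a_{11},0;0,a_{22})$. Since $\bq=(q_1,q_2)'=(a_{11}u_x,a_{22}u_y)'$ when the off-diagonal entries vanish, and $a$ is piecewise constant, one has $q_{1x}=a_{11}u_{xx}$ and $q_{2y}=a_{22}u_{yy}$. Substituting these into \eqref{EQ:wb-pre-new} and multiplying by $h^2$ collapses the two-term expression on each edge into a single term; on a vertical edge $e_1$ this gives
\begin{equation*}
h^2 w_b|_{e_1}=\frac{1}{12}|e_1|\big(|e_1|-6\rho^{-1}h\,a_{22}\big)Q_b(u_{yy})|_{e_1},
\end{equation*}
while on a horizontal edge $e_3$ it gives
\begin{equation*}
h^2 w_b|_{e_3}=\frac{1}{12}|e_3|\big(|e_3|-6\rho^{-1}h\,a_{11}\big)Q_b(u_{xx})|_{e_3}.
\end{equation*}
The decisive observation, and the reason the diagonal hypothesis is indispensable, is that on a vertical boundary segment the $y$-direction is tangential, whence $u_{yy}=g_{yy}$ there, while on a horizontal boundary segment the $x$-direction is tangential, whence $u_{xx}=g_{xx}$. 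Consequently the boundary values $Q_bg+h^2w_b|_{\partial\Omega}$ reduce exactly to \eqref{EQ:PL2:01} on vertical segments and to \eqref{EQ:PL2:02} on horizontal segments, both expressed solely through $g$ and the mesh. This identifies the corollary's boundary prescription with \eqref{EQ:bdry-value}, so that Theorem \ref{THM:SuperC-nonhomo} applies verbatim and yields estimate \eqref{EQ:SuperC:008} for $\widetilde{e_b}=(Q_bu+h^2w_b)-u_b$.

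Finally I would run the postprocessing already displayed in the text: expanding $\nabla_d\widetilde{e_b}=\nabla_d(Q_bu+h^2w_b)-\nabla_d u_b$ turns \eqref{EQ:SuperC:008} into \eqref{EQ:Jan22:002}, and the commutative property \eqref{EQ:q0}, giving $\nabla_d Q_bu=\mathbb{Q}_h\nabla u$, together with the bound $\|\nabla_d w_b\|\le C\|u\|_3$ from \eqref{EQ:wb-estimate}, lets me absorb the $h^2\nabla_d w_b$ contribution by the triangle inequality to reach \eqref{EQ:SuperConv}. The main difficulty here is not analytical but lies in the bookkeeping of the second paragraph: one must check that the second derivatives surviving in $w_b$ are precisely the tangential ones, since in the general non-diagonal case $q_{2y}$ would retain a term $a_{21}u_{xy}$ involving a normal derivative of $u$ that cannot be recovered from $g$. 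The diagonal assumption removes exactly that obstruction, which is why the computable boundary perturbation, and hence the superconvergence, is available only in this setting.
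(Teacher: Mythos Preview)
Your proposal is correct and follows essentially the same route as the paper: specialize $w_b$ from \eqref{EQ:wb-pre-new} to the diagonal case, observe that the surviving second derivatives on each boundary edge are tangential (so $u_{yy}=g_{yy}$ on vertical segments and $u_{xx}=g_{xx}$ on horizontal ones, making the perturbation computable from $g$ alone), invoke Theorem~\ref{THM:SuperC-nonhomo}, and then pass from \eqref{EQ:SuperC:008} to \eqref{EQ:SuperConv} via \eqref{EQ:q0} and \eqref{EQ:wb-estimate}. Your explicit articulation of why the tangential-derivative observation is what makes the diagonal case special is a welcome clarification of a step the paper leaves implicit.
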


For model problems with arbitrary diffusive coefficients, we have the following superconvergence.

\begin{theorem}\label{THM:SuperC-generic}
Assume that $u\in H^3(\Omega)$ is the exact solution of the model problem (\ref{a1})-(\ref{aa1}). Let {\color{black}{$u_{b}\in V_{b}$}} be the weak Galerkin finite element approximation arising from (\ref{WG-scheme}) with the boundary value
\begin{equation}\label{EQ:bdry-value-generic}
\widetilde{Q}_b g := Q_b g.
\end{equation}
Denote by ${e}_b=Q_b u - {\color{black}{u_b}}$ the error function. Then, the following estimate holds true:
\begin{equation}\label{EQ:SuperC:008-generic}
\left(\sum_{T\in\T_{h}}\|\nabla_{d}{e_{b}} \|_{T}^{2}\right)^{\frac12} \leq Ch^{1.5}(\|u\|_{3}+\|\nabla^2 u\|_{0,\partial\Omega}).
\end{equation}
\end{theorem}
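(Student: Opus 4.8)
The plan is to exploit the modified error expansion \eqref{EQ:marker:new} of Theorem \ref{THM:SupC-expansion}, whose right-hand side sits at the scale $\OO(h^2)$, while correcting for the fact that the modified error $\widetilde e_b = e_b + h^2 w_b$ no longer vanishes on $\partial\Omega$. Indeed, with the unperturbed boundary value \eqref{EQ:bdry-value-generic} we have $u_b|_{\partial\Omega} = Q_b g = Q_b u|_{\partial\Omega}$ (since $u=g$ on $\partial\Omega$), so that $e_b = Q_b u - u_b\in V_b^0$, whereas $\widetilde e_b|_{\partial\Omega} = h^2 w_b|_{\partial\Omega}\neq 0$ and hence $\widetilde e_b\notin V_b^0$. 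To restore a legitimate test function, I would introduce a boundary correction $\theta_b\in V_b$ defined by $\theta_b = h^2 w_b$ on every edge $e\subset\partial\Omega$ and $\theta_b = 0$ on every interior edge, so that $\widehat e_b := \widetilde e_b - \theta_b \in V_b^0$, with $w_b$ given by \eqref{EQ:wb-pre-new}.

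Writing $B(\phi,\psi):=\sum_{T\in\T_h}(a\nabla_d\phi,\nabla_d\psi)_T + s(\phi,\psi)$ for the symmetric positive semidefinite energy form and $\|\phi\|_E:=B(\phi,\phi)^{1/2}$ for the induced seminorm (so that $\|\nabla_d\phi\|\le C\|\phi\|_E$ by the uniform positive definiteness of $a$, and $s(\phi,\phi)=\3bar\S(\phi)-\phi\3bar^2$ by \eqref{EQ:S-I}), I would set $v_b=\widehat e_b$ in \eqref{EQ:marker:new}. Using $\widetilde e_b = \widehat e_b + \theta_b$ this yields
$$\|\widehat e_b\|_E^2 = h^2\sum_{T\in\T_h}(a\nabla_d w_b,\nabla_d\widehat e_b)_T + R_3(\widehat e_b) - B(\theta_b,\widehat e_b).$$
The first term is bounded by $Ch^2\|u\|_3\|\widehat e_b\|_E$ through \eqref{EQ:wb-estimate}, the second by $Ch^2\|u\|_3\,\3bar\S(\widehat e_b)-\widehat e_b\3bar\le Ch^2\|u\|_3\|\widehat e_b\|_E$ through \eqref{EQ:remainder}, and the third by $\|\theta_b\|_E\|\widehat e_b\|_E$ through the Cauchy--Schwarz inequality for $B$. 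Dividing by $\|\widehat e_b\|_E$ gives $\|\widehat e_b\|_E\le Ch^2\|u\|_3 + \|\theta_b\|_E$, so that everything reduces to estimating $\|\theta_b\|_E$.

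The main obstacle, and the source of both the reduced rate $h^{1.5}$ and the boundary norm $\|\nabla^2 u\|_{0,\partial\Omega}$, is the estimate of $\|\theta_b\|_E$. Since $\theta_b$ is supported only on the $\OO(h^{-1})$ elements touching $\partial\Omega$, both $\nabla_d\theta_b$ and $\S(\theta_b)-\theta_b$ vanish on interior elements. On a boundary element with boundary edge $e$, the defining formula \eqref{EQ:wb-pre-new} together with $|e_s|\le h$ gives $|\theta_b|\le Ch^2 |D|$ on $e$, where $D$ denotes the relevant edge average of second-order derivatives (one of $Q_b(u_{xx}),Q_b(u_{yy}),Q_b(q_{1x}),Q_b(q_{2y})$). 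Invoking \eqref{EQ:weak-gradient} (which divides edge values by a length $\sim h$) and Lemma \ref{midvalue} then produces
$$\|\nabla_d\theta_b\|_T^2 + \rho h^{-1}\sum_{s=1}^4 |e_s|(\S(\theta_b)-\theta_b)^2(M_s)\le Ch^4 D^2.$$
Summing over the boundary strip and using the elementary bound $D^2|e|\le \int_e|\nabla^2 u|^2\,ds$ (Cauchy--Schwarz on the average) together with $|e|\sim h$ converts $\sum h^4 D^2$ into $Ch^3\|\nabla^2 u\|_{0,\partial\Omega}^2$, whence $\|\theta_b\|_E\le Ch^{1.5}\|\nabla^2 u\|_{0,\partial\Omega}$.

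Finally I would recover \eqref{EQ:SuperC:008-generic} by the triangle inequality. From $e_b = \widehat e_b + \theta_b - h^2 w_b$ we obtain
$$\Big(\sum_{T\in\T_h}\|\nabla_d e_b\|_T^2\Big)^{1/2}\le \|\nabla_d\widehat e_b\| + \|\nabla_d\theta_b\| + h^2\|\nabla_d w_b\|,$$
and bounding the first term by $C\|\widehat e_b\|_E\le C(h^2\|u\|_3+\|\theta_b\|_E)$, the second by $Ch^{1.5}\|\nabla^2 u\|_{0,\partial\Omega}$, and the third by $Ch^2\|u\|_3$ via \eqref{EQ:wb-estimate}, and absorbing $h^2$ into $h^{1.5}$ for $h\le 1$, gives precisely \eqref{EQ:SuperC:008-generic}.
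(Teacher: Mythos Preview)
Your proof is correct and follows essentially the same route as the paper: introduce the boundary correction $\theta_b$ (the paper calls it $\chi_b$), subtract it from $\widetilde e_b$ to obtain a legitimate test function in $V_b^0$, test the expansion \eqref{EQ:marker:new} with this function, and estimate the boundary-correction contributions at the scale $h^{1.5}\|\nabla^2 u\|_{0,\partial\Omega}$. Your treatment of the $\|\theta_b\|_E$ estimate (via the $\OO(h^{-1})$ boundary elements each contributing $\OO(h^4 D^2)$) and of the closing triangle inequality is spelled out in more detail than in the paper, but the architecture of the argument is identical.
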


\begin{proof} From the error expansion \eqref{EQ:marker:new} of Theorem \ref{THM:SupC-expansion}, we have
\begin{equation}\label{EQ:marker:new:001-generic}
(a\nabla_d \widetilde{e}_b, \nabla_d v_b)+
s(\widetilde{e}_b, v_b)= h^2 (a\nabla_d w_b, \nabla_d v_b)+R_3(v_b)
\end{equation}
for all $v_b\in V_b^0$, where $\widetilde{e}_b=(Q_b u -u_b) + h^2 w_b$ and $w_b$ is given by \eqref{EQ:wb-pre-new}. The remainder $R_3(v_b)$ has the estimate \eqref{EQ:remainder}. The modified error function $\widetilde{e}_b$ is generally non-vanishing on the boundary of the domain so that it is disqualified to serve as a test function. To overcome this difficulty, we shall remove the perturbation $h^2w_b$ from $\widetilde{e}_b$ on the boundary by subtracting the following function
\begin{equation}\label{EQ:wb-boundary}
  \chi_b =
  \left\{
  \begin{array}{rl}
  h^2 w_b &\qquad \mbox{ on edge $e\subset\partial\Omega$},\\
  0  &\qquad \mbox{ otherwise}.
  \end{array}
  \right.
\end{equation}
It then follows from \eqref{EQ:marker:new:001-generic} that
\begin{equation}\label{EQ:marker:new:001-generic:002}
\begin{split}
 &\ (a\nabla_d (\widetilde{e}_b-\chi_b), \nabla_d v_b)+
s(\widetilde{e}_b-\chi_b, v_b)\\
 = & \ h^2 (a\nabla_d w_b, \nabla_d v_b)+R_3(v_b)
 - (a\nabla_d \chi_b, \nabla_d v_b)-s(\chi_b, v_b)
\end{split}
\end{equation}
for all $v_b\in V_b^0$. The first two terms on the right-hand side of \eqref{EQ:marker:new:001-generic:002} can be bounded as follows
\begin{equation}\label{EQ:RHT:000}
  |h^2(a\nabla_d w_b, \nabla_d v_b) + R_3(v_b)| \leq C h^2 \|\nabla^3 u\| (\|\nabla_d v_b\| + \3bar \S(v_b)-v_b \3bar ).
\end{equation}
The third and the fourth term on the right-hand side of \eqref{EQ:marker:new:001-generic:002} can be bounded by using the Schwartz inequality
\begin{equation}\label{EQ:RHT:001}
\begin{split}
  |(a\nabla_d \chi_b, \nabla_d v_b)| &\ \leq C \|\nabla \chi_b\| \|\nabla_d v_b\| \\
  &\ \leq C h^{1.5} \|\nabla^2 u\|_{0,\partial\Omega} \|\nabla_d v_b\|
\end{split}
\end{equation}
and
\begin{equation}\label{EQ:RHT:002}
\begin{split}
  |s(\chi_b, v_b)| &\ \leq  s(\chi_b, \chi_b)^{1/2} s(v_b,v_b)^{1/2} \\
  &\ \leq C h^{1.5} \|\nabla^2 u\|_{0,\partial\Omega} s(v_b,v_b)^{1/2}.
\end{split}
\end{equation}
Substituting the last three estimates into \eqref{EQ:marker:new:001-generic:002} with $v_b= \widetilde{e}_b-\chi_b$ yields the following estimate:
$$
\|\nabla_d (\widetilde{e}_b-\chi_b)\| \leq C (h^2 \|\nabla^3 u\| + h^{1.5} \|\nabla^2 u\|_{0,\partial\Omega}),
$$
where we have also used the Cauchy-Schwarz inequality. The last inequality leads to the desired superconvergence estimate \eqref{EQ:SuperC:008-generic}. This completes the proof of the theorem.
\end{proof}

\section{Numerical Experiments}\label{Section:NE}

In this section, we report some computational results for the weak Galerkin finite element scheme (\ref{WG-scheme}) to numerically justify the superconvergence estimates established in Theorems \ref{THM:SuperC-nonhomo} and \ref{THM:SuperC-generic} and Corollary \ref{Corollary:SuperC-nonhomo}. Our numerical experiment makes use of the lowest order of finite element in the scheme (\ref{WG-scheme}) so that the numerical solution $u_{h}=\{u_{0},u_{b}\}$ is given by $u_{0}\in P_{1}(T)$, $u_{b}\in P_{0}(\partial T)$, and $\nabla_{d}u_{h}\in [P_{0}(T)]^{2}$.

Let $u=u(x,y)$ be the exact solution of (\ref{a1})-(\ref{aa1}), and denote by
$$
e_{h}:= Q_{h}u - u_h=\{e_{0},e_{b}\}
$$
the error function, where $e_{0}= Q_{0}u-{\color{black}{{\S}(u_{b})}}$, $e_{b}= Q_{b}u-u_b$. $Q_{0}u$ and $Q_{b}u$ are the $L^{2}$ projections of the exact solution onto the corresponding finite element spaces. Recall that the extension operator ${\S}$ maps piecewise constant functions on $\pT$ to linear functions on $T$ through the least-squares fitting formula (\ref{EQ:extension-S:new}).

The following metrics are used to measure the error $e_h$ in our numerical experiments:
$$
\displaystyle
\begin{array}{lll}
&\mbox{$L^{2}$-norm:} & \ \displaystyle\|u-{\S}(u_{b})\|_0:= \left(\int_\Omega |u-{\S}(u_{b})|^2d\Omega\right)^{1/2},              \\
&\mbox{Discrete $L^\infty$:} & \ \displaystyle \|u-{\S}(u_{b})\|_{{\color{black}{\infty,\star}}}:=\max_{T\in \T_h} |u(x_c,y_c)-{\S}(u_{b})(x_c,y_c)|,\\
&\mbox{Discrete $H^{1}$:} &\ \displaystyle\|\nabla_d u_b-\nabla u\|_{0,\star}:=\left(\sum_{T\in{\T}_{h}}|\nabla_d u_b-\nabla u(x_c,y_c)|^{2}|T|\right)^{1/2}.
\end{array}
$$
Here $|T|$ denotes the area of the element $T$ and $(x_c,y_c)$ represents the coordinates of the element center.

We consider eleven test examples in our numerical experiments; each addresses a particular feature of the superconvergence theory. The domain for all the test cases is chosen as the square domain with uniform or nonuniform partitions consisting of either squares or rectangles. From Corollary \ref{Corollary:SuperC-nonhomo}, the superconvergence estimate (\ref{EQ:SuperConv}) is possible when the Dirichlet boundary value is approximated by a slightly modified $L^2$ projection of the exact boundary value. The numerical experiment will address the following questions for the superconvergence estimate (\ref{EQ:SuperConv}):
\begin{itemize}
\item Is it necessary to use the modified $L^2$ projection \eqref{EQ:PL2:01}-\eqref{EQ:PL2:02} for the Dirichlet boundary value in the scheme (\ref{WG-scheme})?
    \item Does one has any superconvergence for the numerical solution of (\ref{WG-scheme}) when the $L^2$ projection or the usual nodal point interpolation of the Dirichlet boundary value is employed? If yes, what the rate of superconvergence would be?
\end{itemize}

\subsection{Numerical experiments with constant coefficients}\label{subSection:NE1} We first consider several test examples for the model problem \eqref{a1}-\eqref{aa1} with constant diffusive tensor on the unit square domain.

{\bf Test Case 1 (Homogeneous {BVP}):} The model problem (\ref{a1})-(\ref{aa1}) is defined on the unit square domain $\Omega=(0,1)^2$ with diffusive coefficient tensor given by $a_{11}=a_{22}=1, a_{12}=a_{21}=0$. The exact solution is chosen as $u =\sin(\pi x)\sin(\pi y)$. The solution has vanishing boundary value so that the superconvergence estimate (\ref{EQ:SuperConv}) holds true when the numerical boundary value is set to be zero.

Tables \ref{Example1:rho6:01}-\ref{Example1:rho1:02} illustrate the numerical results for the lowest order WG-FEM on uniform square or rectangular partitions with the stabilization parameter $\rho=6$ and $\rho=1$, respectively. The boundary value was set to be zero in the numerical scheme (\ref{WG-scheme}). As the Dirichlet data is homogeneous, all the assumptions of Corollary \ref{Corollary:SuperC-nonhomo} are satisfied for this test example so that a superconvergence of order ${\cal{O}}(h^2)$ is expected for the gradient approximation. The last two columns of Tables \ref{Example1:rho6:01}-\ref{Example1:rho1:02} show the numerical performance of the weak Galerkin finite element scheme (\ref{WG-scheme}) in various $H^1$ norms. The numerical results clearly confirm the superconvergence theory developed in the previous section.

It is interesting to note that the numerical solutions are very close to each other for the stabilization parameter $\rho=1$ and {$\rho=6$} as shown in Tables \ref{Example1:rho6:01} and \ref{Example1:rho1:01}. We also computed the solution for several other values of $\rho$ (e.g., $\rho = 0.01, 0.1, 2, 5$), and the numerical results stay unchanged in terms of $\rho$ on uniform square partitions. In addition, the two tables \ref{Example1:rho6:01} and \ref{Example1:rho1:01} show a superconvergence of order $4$ in the $H^1$ {norm} - a superconvergence phenomena better than what the theory predicted. We believe this is a special property of the testing example and the result is not generalizable to other problems.

\begin{table}[htbp]
\caption{Test Case 1: Numerical performance of the WG scheme \eqref{WG-scheme} (domain $\Omega=(0,1)^2$, exact solution $u=\sin(\pi x)\sin(\pi y)$, uniform square partitions, stabilization parameter $\rho=6$, and vanishing Dirichlet boundary data{\color{black}{).}}}\label{Example1:rho6:01}
{
\begin{center}\centering\scriptsize
\setlength{\extrarowheight}{1.5pt}
\begin{tabular}{|l|l|l|l|l|l|}
\hline $h$ &$\|u-\S(u_{b})\|_{\infty,\star}$& $\|u-\S(u_{b})\|_0$ & $\|\nabla_d({e_{b}})\|_0$  &$\|\nabla_d u_b-\nabla u\|_{0,\star}$  &    $\|\nabla(Q_{0}u-\S(u_{b}))\|_0$  \\
\hline
1/4     &4.5171e-02    &3.0366e-02     &1.0957e-01                &2.2968e-03    &8.7561e-02 \\    \hline
1/8     &1.2456e-02    &7.6006e-03    &2.8256e-02               &1.4594e-04     &2.2598e-02\\     \hline
1/16   &3.1880e-03    &1.9006e-03     &7.1186e-03                &9.1591e-06    &5.6945e-03 \\      \hline
1/32    &8.01643e-04    &4.7517e-04     &1.7831e-03                &5.7307e-07   &1.4265e-03   \\       \hline
1/64    &2.0070e-04    &1.1879e-04    &4.4599e-04               &3.5824e-08     &3.5679e-04 \\           \hline
1/128   &5.0193e-05   &2.9698e-05     &1.1151e-04             &2.2391e-09     &8.9208e-05\\           \hline
1/256    &1.2549e-05   &7.4246e-06   &2.7878e-05                &1.3994e-010    &2.2303e-05 \\ \hline
\hline
Rate        &2.00          & 2.00      &2.00 & 4.00  &2.00 \\
\hline
\end{tabular}
\end{center}
}
\end{table}

\begin{table}[htbp]
\caption{Test Case 1: Numerical performance of the WG scheme \eqref{WG-scheme} (domain $\Omega=(0,1)^2$, exact solution $u=\sin(\pi x)\sin(\pi y)$, uniform rectangular partitions, stabilization parameter $\rho=6$, mesh parameter $h=(h_x+h_y)/2$, and vanishing Dirichlet boundary data{\color{black}{).}}}\label{Example1:rho6:02}
{
\setlength{\extrarowheight}{1.5pt}
\begin{center}\centering\scriptsize
\begin{tabular}{|l|l|l|l|l|l|}
\hline $h$ &$\|u-\S(u_{b})\|_{\infty,\star}$& $\|u-\S(u_{b})\|_0$ & $\|\nabla_d({e_{b}})\|_0$  &$\|\nabla_d u_b-\nabla u\|_{0,\star}$  &    $\|\nabla(Q_{0}u-\S(u_{b}))\|_0$  \\
\hline
2.08e-01     &3.2707e-02    &2.1324e-02     &8.2340e-02
&1.8088e-02   &6.5326e-02   \\    \hline
1.04e-01     &8.7358e-03    &5.3199e-03    &2.0891e-02               &4.1037e-03    &1.6580e-02 \\     \hline
5.21e-02    &2.2193e-03    &1.3294e-03     &5.2424e-03                &9.9990e-04    &4.1609e-03 \\      \hline
2.60e-02    &5.5703e-04    &3.3232e-04     &1.3118e-03                &2.4835e-04    &1.0412e-03  \\       \hline
1.30e-02    &1.3940e-04    &8.3079e-05    &3.2803e-04               &6.1987e-05     &2.6037e-04      \\           \hline
6.51e-03   &3.4858e-05   &2.0770e-05   &8.2013e-05                 &1.5490e-05  &6.5096e-05  \\           \hline
3.26e-03    &8.7150e-06   &5.1924e-06   &2.0503e-05         &3.8722e-06    &1.6274e-05\\           \hline
\hline
Rate        &2.00  & 2.00 &2.00  & 2.00  & 2.00  \\
\hline
\end{tabular}
\end{center}
}
\end{table}

Tables \ref{Example1:rho6:02} and \ref{Example1:rho1:02} show the numerical results when rectangular partitions are used in the numerical scheme (\ref{WG-scheme}). The finite element partitions are obtained from an initial $2\times 3$ uniform partition through the usual successive refinement technique; namely, by dividing each rectangle into four equal-sized sub-rectangles. We use $h_x$ to represent the meshsize in $x$-direction and $h_y$ in $y$-direction.

\begin{table}[htbp]
\caption{Test Case 1:
Numerical performance of the WG scheme \eqref{WG-scheme} (domain $\Omega=(0,1)^2$, exact solution $u=\sin(\pi x)\sin(\pi y)$, uniform square partitions, stabilization parameter $\rho=1$, and vanishing Dirichlet boundary data{\color{black}{).}}}\label{Example1:rho1:01}
{
\setlength{\extrarowheight}{1.5pt}
\begin{center}\centering\scriptsize
\begin{tabular}{|l|l|l|l|l|l|}
\hline $h$ &$\|u-\S(u_{b})\|_{\infty,\star}$& $\|u-\S(u_{b})\|_0$ & $\|\nabla_d({e_{b}})\|_0$  &$\|\nabla_d u_b-\nabla u\|_{0,\star}$  &    $\|\nabla(Q_{0}u-\S(u_{b}))\|_0$  \\
\hline
\hline
1/4     &4.5171e-02    &3.0366e-02     &1.0957e-01                &2.2968e-03    &8.7561e-02 \\    \hline
1/8     &1.2456e-02    &7.6006e-03    &2.8256e-02               &1.4594e-04     &2.2598e-02\\     \hline
1/16   &3.1880e-03    &1.9006e-03     &7.1186e-03                &9.1591e-06    &5.6945e-03 \\      \hline
1/32    &8.01643e-04    &4.7517e-04     &1.7831e-03                &5.7307e-07   &1.4265e-03   \\       \hline
1/64    &2.0070e-04    &1.1879e-04    &4.4599e-04               &3.5824e-08     &3.5679e-04 \\           \hline
1/128   &5.0193e-05   &2.9698e-05     &1.1151e-04             &2.2391e-09     &8.9208e-05\\           \hline
1/256    &1.2549e-05   &7.4246e-06   &2.7878e-05                &1.3994e-010    &2.2303e-05 \\ \hline
\hline
Rate  &2.00 & 2.00  &2.00  & 4.00  &2.00  \\
\hline
\end{tabular}
\end{center}
}
\end{table}

\begin{table}[htbp]
\caption{Test Case 1:
Numerical performance of the WG scheme \eqref{WG-scheme} (domain $\Omega=(0,1)^2$, exact solution $u=\sin(\pi x)\sin(\pi y)$, uniform rectangular partitions, stabilization parameter $\rho=1$, mesh parameter $h=\max\{h_x,h_y\}$, and vanishing Dirichlet boundary data{\color{black}{).}}}\label{Example1:rho1:02}
{
\setlength{\extrarowheight}{1.5pt}
\begin{center}\centering\scriptsize
\begin{tabular}{|l|l|l|l|l|l|}
\hline $h$ &$\|u-\S(u_{b})\|_{\infty,\star}$& $\|u-\S(u_{b})\|_0$ & $\|\nabla_d({e_{b}})\|_0$  &$\|\nabla_d u_b-\nabla u\|_{0,\star}$  &    $\|\nabla(Q_{0}u-\S(u_{b}))\|_0$  \\
\hline
1/4&      2.6067e-02    & 2.1660e-02     &9.9161e-02                &5.6547e-02     &9.0642e-02  \\ \hline
1/8&    6.6802e-03    &5.4600e-03    &2.7845e-02               &1.8770e-02      &2.6179e-02\\ \hline
1/16&     1.6762e-03    &1.3665e-03     & 7.2033e-03                &5.0348e-03    &6.8182e-03  \\ \hline
1/32&     4.1934e-04    &3.4168e-04     & 1.8169e-03                &1.2811e-03     &1.7226e-03  \\ \hline
1/64&    1.0486e-04    &8.5423e-05    &4.5525e-04               &3.2168e-04      &4.3180e-04\\ \hline
1/128&   2.6215e-05   &2.1356e-05     &1.1388e-04                &8.0509e-05     &1.0802e-04\\ \hline
1/256    &6.5538e-06   &5.3390e-06   &2.8473e-05         &2.0133e-05    &2.7010e-05\\           \hline
\hline
Rate &2.00 &2.00     &2.00  & 2.00 & 2.00 \\
\hline
\end{tabular}
\end{center}
}
\end{table}

{\bf Test Case 2 (Nonhomogeneous BVP):} The model problem (\ref{a1})-(\ref{aa1}) is again defined on the unit square domain $\Omega=(0,1)^2$, and the diffusive coefficient $a$ is the identity matrix. The exact solution in this test case is given by $u =\sin(x)\cos(y)$. The Dirichlet boundary value is given by the restriction of the exact solution on the boundary, and the right-hand side function $f$ is computed accordingly.

The Dirichlet boundary value is clearly non-trivial so that the superconvergence estimate (\ref{EQ:SuperConv}) holds true when the numerical boundary value is chosen as the modified $L^2$ projection of the boundary data shown as in \eqref{EQ:PL2:01}-\eqref{EQ:PL2:02}. Tables \ref{Example2:rho1:SquareWP} and \ref{Example2:rho1:RectWP} illustrate the performance of the WG finite element scheme \eqref{WG-scheme} when the modified $L^2$ projection of the boundary data is employed in \eqref{WG-scheme}. The result shows a superconvergence of rate $r=2$ in the discrete $H^1$ norm, which is in great consistency with the theory.

\begin{table}[htbp]\centering\scriptsize
\caption{Test Case 2: Convergence of the lowest order WG-FEM on the unit square domain with exact solution $u=\sin(x)\cos(y)$, uniform square partitions, stabilization parameter $\rho=1$, and perturbed $L^2$ projection of the Dirichlet boundary data by \eqref{EQ:PL2:01}-\eqref{EQ:PL2:02}.}\label{Example2:rho1:SquareWP}
{
\setlength{\extrarowheight}{1.5pt}
\begin{center}
\begin{tabular}{|l|l|l|l|l|l|}
\hline $h$ &$\|u-\S(u_{b})\|_{\infty,\star}$& $\|u-\S(u_{b})\|_0$ & $\|\nabla_d({e_{b}})\|_0$  &$\|\nabla_d u_b-\nabla u\|_{0,\star}$  &    $\|\nabla(Q_{0}u-\S(u_{b}))\|_0$  \\
\hline
1/4&     1.1296e-02    &9.0829e-03     &1.9733e-02                &1.5754e-02     &1.8936e-02 \\ \hline
1/8&     3.0879e-03    &2.2783e-03    &4.9488e-03               &3.9540e-03      &4.7493e-03  \\ \hline
1/16&    8.0006e-04    &5.7037e-04     &1.2386e-03                &9.8992e-04     &1.1887e-03\\ \hline
1/32&    2.0299e-04    &1.4265e-04     &3.0975e-04                &2.4758e-04     &2.9728e-04  \\ \hline
1/64&    5.1074e-05    &3.5667e-05    &7.7444e-05               &6.1902e-05     &7.4327e-05 \\ \hline
1/128&   1.2806e-05   &8.9170e-06     &1.9362e-05                &1.5476e-05    &1.8582e-05 \\ \hline
1/256    &3.2058e-06   &2.2293e-06   &4.8404e-06                &3.8690e-06    &4.6456e-06 \\ \hline
\hline
Rate &2.00 & 2.00 &2.00  & 2.00  & 2.00  \\
\hline
\hline
\end{tabular}
\end{center}
}
\end{table}

\begin{table}[htbp]\centering\scriptsize
\caption{Test Case 2: Convergence of the lowest order WG-FEM on the unit square domain with exact solution $u=\sin(x)\cos(y)$, uniform rectangular partitions, stabilization parameter $\rho=1$, mesh parameter $h=\max(h_x,h_y)$, and perturbed $L^2$ projection of the Dirichlet boundary data by \eqref{EQ:PL2:01}-\eqref{EQ:PL2:02}.}\label{Example2:rho1:RectWP}
{
\setlength{\extrarowheight}{1.5pt}
\begin{center}
\begin{tabular}{|l|l|l|l|l|l|}
\hline $h$ &$\|u-\S(u_{b})\|_{\infty,\star}$& $\|u-\S(u_{b})\|_0$ & $\|\nabla_d({e_{b}})\|_0$  &$\|\nabla_d u_b-\nabla u\|_{0,\star}$  &    $\|\nabla(Q_{0}u-\S(u_{b}))\|_0$  \\
\hline
1/4&      1.1667e-02    & 8.3456e-03     &1.4734e-02                & 1.1884e-02     &1.4008e-02  \\ \hline
1/8&     3.1447e-03    &2.1001e-03    &3.6994e-03               & 2.9877e-03      &3.5184e-03\\ \hline
1/16&     8.1077e-04    &5.2619e-04     &9.2630e-04               &7.4845e-04    &8.8109e-04  \\ \hline
1/32&     2.0539e-04    &1.3163e-04     & 2.3168e-04                &1.8722e-04     &2.2038e-04  \\ \hline
1/64&    5.1652e-05    &3.2912e-05    &5.7926e-05               &4.6813e-05      &5.5101e-05\\ \hline
1/128&   1.2949e-05   &8.2283e-06     &1.4482e-05                &1.1704e-05     &1.3776e-05\\ \hline
1/256    &3.2415e-06   &2.0571e-06   &3.6205e-06         &2.9259e-06    &3.4440e-06\\           \hline
\hline
Rate       &2.00  &2.00 &2.00 & 2.00 & 2.00 \\
\hline
\hline
\end{tabular}
\end{center}
}
\end{table}

Tables \ref{Example2:rho1:SquareNP} and \ref{Example2:rho1:RectNP} show the performance of the WG finite element scheme \eqref{WG-scheme} when the exact $L^2$ projection of the boundary data is employed in \eqref{WG-scheme}. On uniform {\em square partitions}, the numerical solutions are seen to be convergent at the rate of $r=2$ in the discrete $H^1$ norm as shown in Table \ref{Example2:rho1:SquareNP}.
It should be pointed out that on uniform square partitions, one may carry out the analysis further to derive a superconvergence with the full rate of $r=2$ if the boundary data satisfies $u_{xx}=u_{yy}$, which is the case for Test Case 2. On the other hand, Table \ref{Example2:rho1:RectNP} illustrates a convergence at a rate lower than $r=2$ on uniform {\em rectangular partitions}. The result with rectangular partitions reveals a sub-optimal order of superconvergence for the scheme \eqref{WG-scheme} when the boundary value is approximated by the $L^2$ projection or the usual nodal point interpolation. This sub-optimal order has been theoretically proved to be $r=1.5$ with proper regularity assumptions on the exact solution. Note that the computation indicates a superconvergence with an order around $r=1.9$ rather than $r=1.5$.

\begin{table}[htbp]\centering\scriptsize
\caption{Test Case 2: Convergence of the lowest order WG-FEM on the unit square domain with exact solution $u=\sin(x)\cos(y)$, uniform square partitions, stabilization parameter $\rho=1$, mesh parameter $h=\max(h_x,h_y)$, and $L^2$ projection of the Dirichlet boundary data.}\label{Example2:rho1:SquareNP}
{
\setlength{\extrarowheight}{1.5pt}
\begin{center}
\begin{tabular}{|l|l|l|l|l|l|}
\hline $h$ &$\|u-\S(u_{b})\|_{\infty,\star}$& $\|u-\S(u_{b})\|_0$ & $\|\nabla_d({e_{b}})\|_0$  &$\|\nabla_d u_b-\nabla u\|_{0,\star}$  &    $\|\nabla(Q_{0}u-\S(u_{b}))\|_0$  \\
\hline
1/4&     7.8025e-03    &3.4806e-03     &8.0885e-04                &3.9540e-03     &1.0155e-03 \\ \hline
1/8&     2.0827e-03    &8.7414e-04    &2.0366e-04               &9.8946e-04      &2.5557e-04  \\ \hline
1/16&   5.3530e-04    &2.1884e-04     &5.1091e-05                &2.4754e-04     &6.4140e-05\\ \hline
1/32&    1.3547e-04    &5.4730e-05     &1.2788e-05                &6.1899e-05     &1.6056e-05  \\ \hline
1/64&    3.4060e-05    &1.3684e-05    &3.1980e-06               &1.5476e-05     &4.0154e-06 \\ \hline
1/128&   8.5379e-06   &3.4210e-06     &7.9956e-07                &3.8690e-06    &1.0039e-06 \\ \hline
1/256    &2.1372e-06   &8.5526e-07   &1.9989e-07                &9.6726e-07    &2.5111e-07 \\ \hline
\hline
Rate &2.00 & 2.00 &2.00 & 2.00 & 2.00 \\
\hline
\end{tabular}
\end{center}
}
\end{table}

\begin{table}[htbp]\centering\scriptsize
\caption{Test Case 2: Convergence of the lowest order WG-FEM on the unit square domain with exact solution $u=\sin(x)\cos(y)$, uniform rectangular partitions, stabilization parameter $\rho=1$, mesh parameter $h=\max(h_x,h_y)$, and $L^2$ projection of the Dirichlet boundary data.}\label{Example2:rho1:RectNP}
{
\setlength{\extrarowheight}{1.5pt}
\begin{center}
\begin{tabular}{|l|l|l|l|l|l|}
\hline $h$ &$\|u-\S(u_{b})\|_{\infty,\star}$& $\|u-\S(u_{b})\|_0$ & $\|\nabla_d({e_{b}})\|_0$  &$\|\nabla_d u_b-\nabla u\|_{0,\star}$  &    $\|\nabla(Q_{0}u-\S(u_{b}))\|_0$  \\
\hline
1/4&      5.2855e-03    & 2.2786e-03     &6.3200e-03                & 6.8392e-03     &6.3871e-03  \\ \hline
1/8&     1.5497e-03    &5.7273e-04    &2.0952e-03               & 2.1923e-03      &2.1099e-03\\ \hline
1/16&     4.3514e-04    &1.4374e-04     &6.1241e-04               &6.3312e-04    &6.1571e-04  \\ \hline
1/32&     1.1742e-04    &3.5992e-05     &1.7103e-04                &1.7568e-04     &1.7179e-04  \\ \hline
1/64&    3.0790e-05    &9.0027e-06    &4.6707e-05               &4.7771e-05      &4.6880e-05\\ \hline
1/128&   7.9255e-06   &2.2511e-06     &1.2579e-05                &1.2826e-05     &1.2619e-05\\ \hline
1/256    &2.0190e-06   &5.6279e-07   &3.3544e-06         &3.4124e-06    &3.3640e-06\\           \hline
\hline
Rate  &1.97 &2.00     &1.91  & 1.91 & 1.91 \\
\hline
\hline
\end{tabular}
\end{center}
}
\end{table}

Tables \ref{Example2:rho6:SquareWP}-\ref{Example2:rho6:01:RectNP} illustrate the numerical performance of the lowest order WG-FEM on uniform square or rectangular partitions with the stabilization parameter $\rho=6$. The results are similar to the case of $\rho=1$.

The following conclusions seem to be appropriate from Test Case 2:
\begin{itemize}
\item If the Dirichlet boundary value is approximated by the modified $L^2$ projection, the theory-predicted superconvergence of order $2$ by Corollary \ref{Corollary:SuperC-nonhomo} is computationally valid for the WG scheme (\ref{WG-scheme}).
    \item If the Dirichlet boundary value is approximated by the exact $L^2$ projection, the numerical solutions of (\ref{WG-scheme}) do not have a full rate of convergence at $r=2$, but a convergence at a lower rate of $r \approx 1.9$ is observed numerically. Hence, the computation outperforms the theoretical superconvergence of $r=1.5$.
\end{itemize}

\begin{table}[htbp]\centering\scriptsize
\caption{Test Case 2: Convergence of the lowest order WG-FEM on the unit square domain with exact solution $u=\sin(x)\cos(y)$, uniform square partitions, stabilization parameter $\rho=6$, mesh parameter $h=(h_x+h_y)/2$, perturbed $L^2$ projection of the Dirichlet boundary data by \eqref{EQ:PL2:01}-\eqref{EQ:PL2:02}.}\label{Example2:rho6:SquareWP}
{
\setlength{\extrarowheight}{1.5pt}
\begin{center}
\begin{tabular}{|l|l|l|l|l|l|}
\hline $h$ &$\|u-\S(u_{b})\|_{\infty,\star}$& $\|u-\S(u_{b})\|_0$ & $\|\nabla_d({e_{b}})\|_0$  &$\|\nabla_d u_b-\nabla u\|_{0,\star}$  &    $\|\nabla(Q_{0}u-\S(u_{b}))\|_0$  \\
\hline

\hline
1/4     &7.8248e-03    &3.4864e-03     &7.9599e-04                & 3.9619e-03  &1.0135e-03   \\    \hline
1/8     &2.0842e-03    &8.7474e-04    &2.0322e-04               &9.9044e-04    &2.5604e-04 \\     \hline
1/16   &5.3544e-04    &2.1888e-04     &5.1080e-05                &2.4762e-04    &6.4192e-05 \\      \hline
1/32    &1.3548e-04    &5.4733e-05     &1.2788e-05                &6.1904e-05    &1.6060e-05  \\       \hline
1/64    &3.4060e-05    &1.3684e-05    &3.1980e-06               &1.5476e-05     &4.0156e-06 \\           \hline
1/128   &8.5379e-06   &3.4210e-06     &7.9956e-07               &3.8690e-06     &1.0040e-06\\           \hline
1/256    &2.1373e-06   &8.5526e-07   &1.9989e-07                &9.6726e-07    &2.5111e-07 \\ \hline
\hline
Rate  &2.00 & 2.00 &2.00  & 2.00 &2.00 \\
\hline
\end{tabular}
\end{center}
}
\end{table}

\begin{table}[htbp]\centering\scriptsize
\caption{Test Case 2: Convergence of the lowest order WG-FEM on the unit square domain with exact solution $u=\sin(x)\cos(y)$, uniform rectangular partitions, stabilization parameter $\rho=6$, $h=(h_x+h_y)/2$, and perturbed $L^2$ projection of the Dirichlet boundary data by \eqref{EQ:PL2:01}-\eqref{EQ:PL2:02}.}\label{Example2:rho6:RectWP}
{
\setlength{\extrarowheight}{1.5pt}
\begin{center}
\begin{tabular}{|l|l|l|l|l|l|}
\hline $h$ &$\|u-\S(u_{b})\|_{\infty,\star}$& $\|u-\S(u_{b})\|_0$ & $\|\nabla_d({e_{b}})\|_0$  &$\|\nabla_d u_b-\nabla u\|_{0,\star}$  &    $\|\nabla(Q_{0}u-\S(u_{b}))\|_0$  \\
\hline
2.08e-01     &5.6939e-03    &2.4689e-03     &7.5368e-04                &2.5643e-03   &6.6175e-04   \\    \hline
1.04e-01      &1.5092e-03    &6.1891e-04     &1.9018e-04                &6.4106e-04   &1.6697e-04 \\     \hline
5.21e-02    &3.8698e-04    &1.5483e-04     &4.7657e-05                &1.6027e-04    &4.1846e-05 \\      \hline
2.60e-02    &9.7868e-05    &3.8715e-05     &1.1921e-05                &4.0067e-05    &1.0468e-05  \\       \hline
1.30e-02    &2.4601e-05    &9.6791e-06    &2.9808e-06               &1.0017e-05     &2.6174e-06      \\           \hline
6.51e-03   &6.1664e-06   &2.4198e-06   &7.4522e-07                 &2.5042e-06  &6.5438e-07  \\           \hline
3.26e-03    &1.5436e-06   &6.0495e-07   &1.8630e-07         &6.2605e-07    &1.6451e-07\\           \hline
\hline
Rate &2.00 & 2.00 &2.00 & 2.00 & 2.00   \\
\hline
\end{tabular}
\end{center}
}
\end{table}

\begin{table}[htbp]\centering\scriptsize
\caption{Test Case 2: Convergence of the lowest order WG-FEM on the unit square domain with exact solution $u=\sin(x)\cos(y)$, uniform rectangular partitions, stabilization parameter $\rho=6$, mesh parameter $h=(h_x+h_y)/2$, and $L^2$ projection of the Dirichlet boundary data.}\label{Example2:rho6:01:RectNP}
{
\setlength{\extrarowheight}{1.5pt}
\begin{center}
\begin{tabular}{|l|l|l|l|l|l|}
\hline $h$ &$\|u-\S(u_{b})\|_{\infty,\star}$& $\|u-\S(u_{b})\|_0$ & $\|\nabla_d({e_{b}})\|_0$  &$\|\nabla_d u_b-\nabla u\|_{0,\star}$  &    $\|\nabla(Q_{0}u-\S(u_{b}))\|_0$  \\
\hline
2.08e-01     &5.6665e-03    &2.4609e-03     &1.8508e-03                &3.3965e-03   &1.9430e-03   \\    \hline
1.04e-01      &1.5383e-03    &6.1680e-04     &5.1792e-04                &8.7958e-04   &5.3789e-04 \\     \hline
5.21e-02    &4.0252e-04    &1.5430e-04     &1.4171e-04               &2.2726e-04    &1.4624e-04 \\      \hline
2.60e-02    &1.0309e-04    &3.8582e-05     &3.8231e-05                &5.8602e-05    &3.9279e-05  \\       \hline
1.30e-02    &2.6126e-05    &9.6460e-06    &1.0210e-05               &1.5084e-05     &1.0456e-05      \\           \hline
6.51e-03   &6.5830e-06   &2.4115e-06   &2.7058e-06                 &3.8763e-06  &2.7637e-06  \\           \hline
3.26e-03    &1.6532e-06   &6.0288e-07   &7.1270e-07         &9.9473e-07    &7.2655e-07\\           \hline
\hline
Rate  &2.00 & 2.00 &1.92 & 1.96 & 1.93 \\
\hline
\end{tabular}
\end{center}
}
\end{table}

{\bf Test Case 3 (Nonhomogeneous BVP):} The model problem in Test Case 3 has exact solution $u=\exp(x)\sin(y)$ on the unit square domain, with diffusive coefficient tensor given as the identity matrix. Unlike Test Case 2, the boundary data for Test Case 3 does not satisfy $u_{xx}=u_{yy}$ so that no superconvergence of full order of $r=2$ is predicted even on uniform square partitions when the usual $L^2$ projection is applied for the boundary data.

Table \ref{Example3:rho1:SquareNP} shows the numerical result on uniform square partitions when the exact $L^2$ projection of the Dirichlet boundary value is used in the numerical scheme \eqref{WG-scheme}. The result indicates a superconvergence of sub-optimal order of $r\approx 1.9$. Again, it should be pointed out that a superconvergence of order $r=1.5$ has been theoretically established in previous sections.

\begin{table}[htbp]\centering\scriptsize
\caption{Test Case 3: Convergence of the lowest order WG-FEM on the unit square domain with exact solution $u=\exp(x)\sin(y)$, uniform square partitions, stabilization parameter $\rho=1$, $L^2$ projection of the boundary data.}\label{Example3:rho1:SquareNP}
{
\setlength{\extrarowheight}{1.5pt}
\begin{center}
\begin{tabular}{|l|l|l|l|l|l|}
\hline $h$ &$\|u-\S(u_{b})\|_{\infty,\star}$& $\|u-\S(u_{b})\|_0$ & $\|\nabla_d({e_{b}})\|_0$  &$\|\nabla_d u_b-\nabla u\|_{0,\star}$  &    $\|\nabla(Q_{0}u-\S(u_{b}))\|_0$  \\
\hline
$1/4$     &1.6582e-02    &1.3205e-02     &8.726631e-02                &8.726721e-02    &8.7277e-02  \\     \hline
$1/8$     &7.3819e-03    &3.7783e-03     &3.051025e-02                &3.051030e-02    &3.0512e-02 \\           \hline
$1/16$    &2.3888e-03    &9.9876e-04     &9.281043e-03                &9.281047e-03    &9.2813e-03  \\     \hline
$1/32$    &7.0761e-04    &2.5462e-04     &2.668696e-03                &2.668696e-03    &2.6688e-03  \\           \hline
$1/64$    &1.9432e-04    &6.4063e-05     &7.440802e-04                &7.440803e-04     &7.4409e-04 \\              \hline
$1/128$   &5.1486e-05    &1.6047e-05     &2.034332e-04                &2.034332e-04     &2.0344e-04\\           \hline
$1/256$    &1.3370e-05   &4.0143e-06     &5.486702e-05                &5.486703e-05      & 5.4869e-05                         \\           \hline
\hline
Rate  &1.95  &2.00  &1.89 &1.89 & 1.89 \\
\hline
\end{tabular}
\end{center}
}
\end{table}

Table \ref{Example3:rho1:SquareWP} shows the numerical result on uniform square partitions when the modified $L^2$ projection of the Dirichlet boundary value is used in the numerical scheme \eqref{WG-scheme}. The result indicates a full superconvergence of rate $r=2$, which is in great consistency with Corollary \ref{Corollary:SuperC-nonhomo}.

\begin{table}[htbp]\centering\scriptsize
\caption{Test Case 3: Convergence of the lowest order WG-FEM on the unit square domain with exact solution $u=\exp(x)\sin(y)$, uniform square partitions, $\rho=1$, perturbed $L^2$ projection of the boundary data by \eqref{EQ:PL2:01}-\eqref{EQ:PL2:02}.}\label{Example3:rho1:SquareWP}
{
\setlength{\extrarowheight}{1.5pt}
\begin{center}
\begin{tabular}{|l|l|l|l|l|l|}
\hline $h$ &$\|u-\S(u_{b})\|_{\infty,\star}$& $\|u-\S(u_{b})\|_0$ & $\|\nabla_d({e_{b}})\|_0$  &$\|\nabla_d u_b-\nabla u\|_{0,\star}$  &    $\|\nabla(Q_{0}u-\S(u_{b}))\|_0$  \\
\hline
$1/4$     &2.5301e-03    &9.2574e-03     &4.554895e-02                &4.551208e-02    &4.7385e-02  \\     \hline
$1/8$     &6.3910e-04    &2.1974e-03     &1.143912e-02                &1.143926e-02      &1.1900e-02 \\           \hline
$1/16$    &1.6260e-04    &5.4172e-04     &2.863968e-03                &2.863977e-03    &2.9792e-03  \\     \hline
$1/32$    &4.0828e-05    &1.3495e-04     &7.162828e-04                &7.162834e-04    &7.4509e-04  \\           \hline
$1/64$    &1.0211e-05    &3.3706e-05     &1.790895e-04                &1.790896e-04      &1.8629e-04 \\              \hline
$1/128$   &2.5534e-06   &8.4246e-06      &4.477358e-05                &4.477358e-05     &4.6574e-05\\           \hline
$1/256$    &6.3837e-07   &2.1060e-06     &1.119347e-05                &1.119347e-05      & 1.1619e-05                         \\           \hline
\hline
Rate       &2.00 & 2.00 &2.00 &2.00 & 2.00 \\
\hline
\hline
\end{tabular}
\end{center}
}
\end{table}

Tables \ref{Example3:rho1:RectNP}-\ref{Example3:rho1:RectWP} show the numerical results on uniform rectangular partitions when the Dirichlet boundary value is approximated with various approaches in the numerical scheme \eqref{WG-scheme}. The results are similar to those on square partitions. Most notably, Table \ref{Example3:rho1:RectWP} provides a numerical verification of the superconvergence estimate detailed in Corollary \ref{Corollary:SuperC-nonhomo}.

\begin{table}[htbp]\centering\scriptsize
\caption{Test Case 3: Convergence of the lowest order WG-FEM on the unit square domain with exact solution $u=\exp(x)\sin(y)$, uniform rectangular partitions, stabilization parameter $\rho=1$, meshsize $h=\max\{h_x,h_y\}$, and $L^2$ projection of the boundary data.}\label{Example3:rho1:RectNP}
{
\setlength{\extrarowheight}{1.5pt}
\begin{center}
\begin{tabular}{|l|l|l|l|l|l|}
\hline $h$ &$\|u-\S(u_{b})\|_{\infty,\star}$& $\|u-\S(u_{b})\|_0$ & $\|\nabla_d({e_{b}})\|_0$  &$\|\nabla_d u_b-\nabla u\|_{0,\star}$  &    $\|\nabla(Q_{0}u-\S(u_{b}))\|_0$  \\
\hline
1/4    &1.6184e-02    &1.0739e-02    &7.9121e-02                &7.8678e-02   &7.9028e-02   \\    \hline
1/8     &6.8896e-03    &3.1242e-03    &2.6904e-02               &2.6796e-02    &2.6881e-02 \\     \hline
1/16    &2.3176e-03    &8.2994e-04     &8.1057e-03                &8.0813e-03    &8.1005e-03 \\      \hline
1/32    &6.7670e-04    &2.1190e-04     &2.3194e-03                &2.3139e-03    &2.3182e-03  \\       \hline
1/64    &1.8508e-04    &5.3335e-05    &6.4474e-04               &6.4349e-04     &6.4447e-04      \\           \hline
1/128   &4.8913e-05   &1.3362e-05   &1.7591e-04           &1.7563e-04  &1.7585e-04  \\           \hline
1/256    &1.2678e-05   &3.3426e-06   &4.7374e-05         &4.7308e-05    &4.7361e-05\\           \hline
\hline
Rate     &1.95  &2.00     &1.89  & 1.89  & 1.89    \\
\hline
\end{tabular}
\end{center}
}
\end{table}

\begin{table}[htbp]\centering\scriptsize
\caption{Test Case 3: Convergence of the lowest order WG-FEM on the unit square domain with exact solution $u=\exp(x)\sin(y)$, uniform rectangular partitions, stabilization parameter $\rho=1$, meshsize $h=\max\{h_x,h_y\}$, and perturbed $L^2$ projection of the boundary data by \eqref{EQ:PL2:01}-\eqref{EQ:PL2:02}.}\label{Example3:rho1:RectWP}
{
\setlength{\extrarowheight}{1.5pt}
\begin{center}
\begin{tabular}{|l|l|l|l|l|l|}
\hline $h$ &$\|u-\S(u_{b})\|_{\infty,\star}$& $\|u-\S(u_{b})\|_0$ & $\|\nabla_d({e_{b}})\|_0$  &$\|\nabla_d u_b-\nabla u\|_{0,\star}$  &    $\|\nabla(Q_{0}u-\S(u_{b}))\|_0$  \\
\hline
1/4    &1.0775e-02    &8.5720e-03     &4.2529e-02                &4.1088e-02   &4.3546e-02   \\    \hline
1/8     &3.0986e-03    &2.1106e-03    &1.0669e-02               &1.0307e-02    &1.0923e-02 \\     \hline
1/16    &8.2422e-04    &5.2608e-04     &2.6702e-03                &2.5797e-03    &2.7340e-03 \\      \hline
1/32    &2.1187e-04    &1.3144e-04     &6.6775e-04                &6.4512e-04    &6.8370e-04  \\       \hline
1/64    &5.3652e-05    &3.2854e-05    &1.6695e-04               &1.6129e-04     &1.7094e-04      \\           \hline
1/128   &1.3495e-05   &8.2132e-06   &4.1739e-05           &4.0324e-05  &4.2736e-05  \\           \hline
1/256    &3.3836e-06   &2.0532e-06   &1.0435e-05         &1.0081e-05    &1.0683e-05\\           \hline
\hline
Rate      &2.00  &2.00 &2.00 & 2.00 & 2.00 \\
\hline
\end{tabular}
\end{center}
}
\end{table}

{\bf Test Case 4 (Nonhomogeneous BVP):} Table \ref{Example4:rho1:RectWP} contains some numerical results for the model problem in $\Omega=(0,1)^{2}$ with exact solution $u=\sin(x)\sin(y)$ on uniform rectangular partitions. The diffusive coefficient tensor $a$ is a full symmetric and positive definite matrix with constant values. The results are in consistency with our theory.

\begin{table}[htbp]\centering\scriptsize
{\color{black}{\caption{Test Case 4: Convergence of the lowest order WG-FEM on the unit square domain with exact solution $u=\sin(x)\sin(y)$, uniform rectangular partitions, stabilization parameter $\rho=1$, meshsize $h=\max\{h_x,h_y\}$, and $L^2$ projection of the boundary data, coefficient matrix  $a_{11}=3$, $a_{12}=a_{21}=1$, and $a_{22}=2$.} \label{Example4:rho1:RectWP}
{
\setlength{\extrarowheight}{1.5pt}
\begin{center}
\begin{tabular}{|l|l|l|l|l|l|}
\hline $h$ &$\|u-\S(u_{b})\|_{\infty,\star}$& $\|u-\S(u_{b})\|_0$ & $\|\nabla_d({e_{b}})\|_0$  &$\|\nabla_d u_b-\nabla u\|_{0,\star}$  &    $\|\nabla(Q_{0}u-\S(u_{b}))\|_0$  \\
\hline
1/4     &5.6374e-03    &3.6092e-03     &1.6962e-02                &1.7087e-02    &1.6982e-02   \\    \hline
1/8     &2.2290e-03    &1.0981e-03     &6.8691e-03                &6.8863e-03    &6.8733e-03 \\     \hline
1/16    &7.4201e-04    &3.0150e-04     &2.2557e-03                &2.2588e-03    &2.2566e-03 \\      \hline
1/32    &2.1833e-04    &7.8118e-04     &6.7364e-04                &6.7429e-04    &6.7385e-04  \\       \hline
1/64    &6.0001e-05    &1.9769e-05     &1.9175e-04                &1.9190e-04    &1.9180e-04      \\           \hline
1/128   &1.5811e-05    &4.9615e-06     &5.3108e-05                &5.3141e-05    &5.3119e-05  \\           \hline
1/256   &4.0795e-06    &1.2419e-06     &1.4452e-05                &1.4459e-05    &1.4454e-05\\           \hline
\hline
Rate      &1.95        &2.00         &1.88                         & 1.88     & 1.88 \\
\hline
\end{tabular}
\end{center}
}}
}

\end{table}

{\bf Test Case 5 (Nonhomogeneous BVP):} In this example, the domain is the unit square $\Omega=(0,1)^{2}$ and the diffusive coefficient matrix is the identity. The exact solution is chosen as $u=\cos(\pi x)\sin(\pi y)$. The rectangular partition 
was obtained as the tensor product of two one-dimensional partitions in $x$ and $y$ directions, respectively. The 1-d non-uniform mesh in the $x$ direction is given by $[0:h_1:0.5,0.5:h_2:1]$ with $h_2=h_1/2$, and the 1-d non-uniform mesh in the $y$ direction is given by $[0:\tau_1:0.5,0.5:\tau_2:1]$ with $\tau_2=\tau_1/2$; see \cite{SYM-LI} for more details. Tables \ref{Example7:rho1:non-RectNP1-1}-\ref{Example7:rho1:non-RectNP1-2} illustrate the superconvergence performance for the WG finite element approximations. The results are in good consistency with the theory.


\begin{table}[htbp]\centering\scriptsize
{\color{black}{\caption{Test Case 5: Convergence of the lowest order WG-FEM on the unit square domain with exact solution $u=\cos(\pi x)\sin(\pi y)$, nonuniform rectangular partitions, stabilization parameter $\rho=1$, and $L^2$ projection of the boundary data.} \label{Example7:rho1:non-RectNP1-1}
{
\setlength{\extrarowheight}{1.5pt}
\begin{center}
\begin{tabular}{|l|l|l|l|l|l|}
\hline $h$ &$\|u-\S(u_{b})\|_{\infty,\star}$& $\|u-\S(u_{b})\|_0$ & $\|\nabla_d({e_{b}})\|_0$  &$\|\nabla_d u_b-\nabla u\|_{0,\star}$  &    $\|\nabla(Q_{0}u-\S(u_{b}))\|_0$  \\
\hline
1/4     &6.5626e-02    &2.3170e-02     &9.2942e-02                &8.9523e-02    &8.8819e-02   \\    \hline
1/8     &1.7693e-02    &6.0733e-03     &3.5040e-02                &3.4625e-02    &3.4636e-02 \\     \hline
1/16    &5.1259e-03    &1.5556e-03     &1.0803e-02                &1.0753e-02    &1.0752e-02 \\      \hline
1/32    &1.4092e-03    &3.9244e-04     &3.0306e-03                &3.0224e-03    &3.0217e-03  \\       \hline
1/64    &3.7339e-04    &9.8396e-05     &8.2070e-04                &8.1906e-04    &8.1883e-04      \\           \hline
1/128   &9.6152e-05    &2.4621e-05     &2.1888e-04                &2.1851e-04    &2.1845e-04  \\           \hline
1/256   &2.4448e-05    &6.1567e-06     &5.7874e-05                &5.7789e-05    &5.7775e-05\\           \hline
\hline
Rate     &1.98       &2.00           &1.92     & 1.92  & 1.92    \\
\hline
\end{tabular}
\end{center}
}}
}

\end{table}

\begin{table}[htbp]\centering\scriptsize
{\color{black}{\caption{Test Case 5: Convergence of the lowest order WG-FEM on the unit square domain with exact solution $u=\cos(\pi x)\sin(\pi y)$, nonuniform rectangular partitions, stabilization parameter $\rho=1$, and perturbed $L^2$ projection of the boundary data.} \label{Example7:rho1:non-RectNP1-2}
{
\setlength{\extrarowheight}{1.5pt}
\begin{center}
\begin{tabular}{|l|l|l|l|l|l|}
\hline $h$ &$\|u-\S(u_{b})\|_{\infty,\star}$& $\|u-\S(u_{b})\|_0$ & $\|\nabla_d({e_{b}})\|_0$  &$\|\nabla_d u_b-\nabla u\|_{0,\star}$  &    $\|\nabla(Q_{0}u-\S(u_{b}))\|_0$  \\
\hline
1/4     &1.1206e-01    &6.2495e-02     &3.4427e-01                &2.7957e-01    &3.3185e-01   \\    \hline
1/8     &3.4880e-02    &1.6893e-02     &9.3762e-02                &7.7517e-02    &9.0669e-02 \\     \hline
1/16    &9.4654e-03    &4.3372e-03     &2.4119e-02                &2.0066e-02    &2.3350e-02 \\      \hline
1/32    &2.4449e-03    &1.0922e-03     &6.0769e-03                &5.0643e-03    &5.8849e-03  \\       \hline
1/64    &6.1982e-04    &2.7356e-04     &1.5222e-03                &1.2691e-03    &1.4743e-03      \\           \hline
1/128   &1.5595e-04    &6.8422e-05     &3.8075e-04                &3.1748e-04    &3.6876e-04  \\           \hline
1/256   &3.9104e-05    &1.7108e-05     &9.5200e-05                &7.9382e-05    &9.2202e-05\\           \hline
\hline
Rate     &2.00       &2.00           &2.00     & 2.00  & 2.00    \\
\hline
\end{tabular}
\end{center}
}}
}

\end{table}

{\bf Test Case 6 (Nonhomogeneous BVP):} The configuration for this test example is as follows: (1) the domain is the unit square, (2) the diffusive coefficient tensor is the identity matrix, and (3) the exact solution is given by $u=\cos(\pi x)\sin(\pi y)$. The nonuniform rectangular partitions are obtained by perturbing the uniform $N\times N$ square partition with a random noise. More precisely, for any element $T=[x_i, x_{i+1}]\times [y_j, y_{j+1}]$ of the uniform $N\times N$ square partition of the domain $\Omega=(0,1)^2$, one alters $x_{i+1}$ and $y_{j+1}$ by using the following formula:
$$
x_{i+1}^*=x_{i+1} + 0.2(\mbox{rand}(1)-0.5)h,\qquad y_{j+1}^*=y_{j+1} + 0.2(\mbox{rand}(1)-0.5)h,
$$
where $h=1/N$ and $\mbox{rand}(1)$ is the MatLab function that returns a single uniformly distributed random number in the interval $(0,1)$. The WG finite element method of the lowest order was then employed to solve the model problem on each perturbed partition. Tables \ref{Example2:rho1:SquareWP:test4-1} and \ref{Example2:rho1:SquareWP:test4-2} illustrate the performance of the WG finite element method on such nonuniform partitions. The numerical results are in great consistency with the theory developed in previous sections.

\begin{table}[htbp]\centering\scriptsize
\caption{Test Case 6: Convergence of the lowest order WG-FEM on the unit square domain with exact solution $u=\cos(\pi x)\sin(\pi y)$, nonuniform square partitions, stabilization parameter $\rho=1$, and $L^2$ projection of the Dirichlet boundary data.}\label{Example2:rho1:SquareWP:test4-1}
{
\setlength{\extrarowheight}{1.5pt}
\begin{center}
\begin{tabular}{|l|l|l|l|l|l|}
\hline meshes &$\|u-\S(u_{b})\|_{\infty,\star}$& $\|u-\S(u_{b})\|_0$ & $\|\nabla_d({e_{b}})\|_0$  &$\|\nabla_d u_b-\nabla u\|_{0,\star}$  &    $\|\nabla(Q_{0}u-\S(u_{b}))\|_0$  \\
\hline
$8\times8$     &2.2983e-02    &8.8478e-03    &1.8961e-02               &2.4106e-02     &1.6519e-02\\     \hline
$16\times16$  &6.1184e-03    &2.2584e-03     &5.3835e-03                &6.8523e-03    &4.9258e-03 \\      \hline
$32\times32$   &1.7775e-03    &5.6631e-04     &1.9757e-03                &2.2323e-03   &1.9003e-03   \\       \hline
$64\times64$   &4.3652e-04    &1.4188e-04    &5.6026e-04               &6.1316e-04     &5.4222e-04 \\           \hline
$128\times128$  &1.0967e-04   &3.5501e-05     &1.9018e-04             &2.0005e-04     &1.8698e-04\\           \hline
\hline
Rate        &2.06          &2.06      &1.61                      & 1.67  &1.59 \\
\hline
\end{tabular}
\end{center}
}
\end{table}

\begin{table}[htbp]\centering\scriptsize
\caption{Test Case 6: Convergence of the lowest order WG-FEM on the unit square domain with exact solution $u=\cos(\pi x)\sin(\pi y)$, nonuniform square partitions, stabilization parameter $\rho=1$, and perturbed $L^2$ projection of the Dirichlet boundary data by \eqref{EQ:PL2:01}-\eqref{EQ:PL2:02}.}\label{Example2:rho1:SquareWP:test4-2}
{
\setlength{\extrarowheight}{1.5pt}
\begin{center}
\begin{tabular}{|l|l|l|l|l|l|}
\hline $meshes$ &$\|u-\S(u_{b})\|_{\infty,\star}$& $\|u-\S(u_{b})\|_0$ & $\|\nabla_d({e_{b}})\|_0$  &$\|\nabla_d u_b-\nabla u\|_{0,\star}$  &    $\|\nabla(Q_{0}u-\S(u_{b}))\|_0$  \\
\hline
$8\times8$     &3.4671e-02    &2.2238e-02  &1.3155e-01      &1.0632e-01                   &1.2631e-01\\     \hline
$16\times16$  &1.0484e-02    &5.8001e-03   &3.4684e-02      &2.8371e-02                   &3.3389e-02 \\      \hline
$32\times32$   &3.0887e-03    &1.5254e-03  &9.0749e-03       &7.4953e-03                  &8.7541e-03   \\       \hline
$64\times64$   &7.8391e-04    &3.7947e-04  &2.2585e-03      &1.8614e-03                  &2.1775e-03 \\           \hline
$128\times128$  &2.0804e-04   &9.7546e-05   &5.7944e-04      &4.8025e-04             &5.5927e-04\\           \hline
\hline
Rate        &1.98          & 2.02      &2.02                                 & 2.02  &2.02 \\
\hline
\end{tabular}
\end{center}
}
\end{table}

{\bf Test Case 7 (Nonhomogeneous BVP):} The configuration for this test example is the same as the Test Case 6, but the computation on the rate of convergence follows a different approach. More precisely, for each $N=2^j$, we first construct a nonuniform rectangular partition of size $N\times N$ by using the perturbation method employed in Test Case 6, and then obtain a numerical solution by using the lowest order WG finite element method. Next, we refine this nonuniform rectangular partition through the usual bisection method (i.e.,  divide each rectangular element into four equal-sized sub-rectangles), and then subsequently apply the WG-FEM on the new mesh. The two numerical solutions are used to compute the rate of convergence.

Table \ref{Example5:rho1:rectangularWP:testmesh5-2-6-17:9:00} shows the performance of the WG finite element method with the current configuration. The theoretical rate of convergence in the discrete $H^1$ norm is $r=1.5$, and the numerical experiment provides a good confirmation of the theory.

\begin{table}[htbp]\centering\scriptsize
\caption{Test Case 7: Convergence of the lowest order WG-FEM on the unit square domain with exact solution $u=\cos(\pi x)\sin(\pi y)$, nonuniform rectangular partitions, stabilization parameter $\rho=1$, and $L^2$ projection of the Dirichlet boundary data.}\label{Example5:rho1:rectangularWP:testmesh5-2-6-17:9:00}
{
\setlength{\extrarowheight}{1.5pt}
\begin{center}
\begin{tabular}{|l|l|l|l|l|l|}
\hline $N=$ &$\|u-\S(u_{b})\|_{\infty,\star}$& $\|u-\S(u_{b})\|_0$ & $\|\nabla_d({e_{b}})\|_0$  &$\|\nabla_d u_b-\nabla u\|_{0,\star}$  &    $\|\nabla(Q_{0}u-\S(u_{b}))\|_0$  \\
\hline
$2^2$    &6.8448e-02    &3.3954e-02     &6.7756e-02               &8.2546e-02     &5.4303e-02\\     \hline
$2^3$    &2.0911e-02    &8.7566e-03     &1.7788e-02                &2.3043e-02    &1.5040e-02 \\      \hline
Rate              &1.71          & 1.96         &1.93                      & 1.84       &1.85 \\ \hline
\hline
$2^3$    &2.3296e-02    &9.0039e-03     &1.8438e-02               &2.5500e-02     &1.6516e-02\\     \hline
$2^4$    &6.6533e-03    &2.2813e-03     &5.3720e-03                &7.0895e-03    &5.0103e-03 \\      \hline
Rate              &1.81          & 1.98         &1.78                      & 1.85       &1.72 \\ \hline
\hline
$2^4$    &6.5612e-03    &2.2503e-03     &5.2747e-03               &6.6620e-03     &4.7837e-03\\     \hline
$2^5$    &1.7191e-03    &5.6465e-04     &1.4668e-03                &1.7944e-03    &1.3604e-03 \\      \hline
Rate        &1.93                & 2.00      &1.85                                  & 1.89  &1.81 \\
\hline
$2^5$    &1.6922e-03    &5.7132e-04     &2.0170e-03               &2.2462e-03     &1.9364e-03\\     \hline
$2^6$    &4.4502e-04    &1.4304e-04     &6.4701e-04                &6.9347e-04    &6.3223e-04 \\      \hline
Rate             &1.93                & 2.00      &1.64                                  & 1.70  &1.61 \\
\hline
$2^6$    &4.4183e-04    &1.4148e-04     &5.3399e-04               &5.8978e-04     &5.1567e-04\\     \hline
$2^7$    &1.1448e-04    &3.5384e-05     &1.7216e-04                &1.8321e-04    &1.6865e-04 \\      \hline
Rate        &1.95                & 2.00      &1.63                                  & 1.69  &1.61 \\
\hline
$2^7$    &1.0699e-04    &3.5341e-05     &1.7135e-04               &1.8289e-04     &1.6785e-04\\     \hline
$2^8$    &2.7700e-05    &8.8366e-06     &5.7743e-05                &5.9914e-05    &5.7097e-05 \\      \hline
Rate        &1.95                & 2.00      &1.57                                  & 1.61  &1.56 \\
\hline
\end{tabular}
\end{center}
}
\end{table}

\subsection{Numerical experiments with discontinuous coefficients}\label{subSection:NE8} The goal here is to numerically verify the superconvergence theory when the diffusive coefficient tensor is discontinuous in the domain.

{\bf Test Case 8 (Homogeneous BVP):} In this test case, the domain is given by $\Omega=(-1,1)^{2}$ and the diffusive coefficient tensor $a$ is given by
\begin{eqnarray*}
a=\left(\begin{array}{cccc}
\alpha_{i}^{x},0 \\
0,\alpha_{i}^{y}
\end{array}\right).
\end{eqnarray*}
The exact solution is chosen as $u=\alpha_{i}\sin(2\pi x)\sin(2\pi y)$. Here the value of the coefficient
 $\alpha^{x}_i,\alpha^{y}_i,$ $\alpha_{i}$ are specified in Table \ref{Example88:coefficient parameter}. This test problem has been considered in \cite{WG-CFO-Wang}. The numerical results are shown in Table \ref{Example5:rho1:SquWP}.

\begin{table}[htbp]\centering\scriptsize
{\color{black}{\caption{Test Case 7: Parameter values for the diffusive coefficients and the exact solution.} \label{Example88:coefficient parameter}
{
\setlength{\extrarowheight}{1.5pt}
\begin{center}
\begin{tabular}{|l|l|}
\hline
$\alpha_{4}^{x}=0.1$   &$\alpha_{3}^{x}=1000$\\
$\alpha_{4}^{y}=0.01$    &$\alpha_{3}^{y}=100$\\
$\alpha_{4}=100$    &$\alpha_{3}=0.01$    \\ \hline
$\alpha_{1}^{x}=100$   &$\alpha_{2}^{x}=1$\\
$\alpha_{1}^{y}=10$   &$\alpha_{2}^{y}=0.1$\\
$\alpha_{1}=0.1$   &$\alpha_{2}=10$    \\  \hline
\end{tabular}
\end{center}
}}
}
\end{table}

\begin{table}[htbp]\centering\scriptsize
{\color{black}{\caption{Test Case 8: Convergence of the lowest order WG-FEM on the $(-1,1)^{2}$ with exact solution $u=\alpha_{i}\sin(2\pi x)\sin(2\pi y)$, discontinuous diffusive tensor, uniform square partitions, stabilization parameter $\rho=1$, and $L^2$ projection of the boundary data.} \label{Example5:rho1:SquWP}
{
\setlength{\extrarowheight}{1.5pt}
\begin{center}
\begin{tabular}{|l|l|l|l|l|l|}
\hline $h$ &$\|u-\S(u_{b})\|_{\infty,\star}$& $\|u-\S(u_{b})\|_0$ & $\|\nabla_d({e_{b}})\|_0$  &$\|\nabla_d u_b-\nabla u\|_{0,\star}$  &    $\|\nabla(Q_{0}u-\S(u_{b}))\|_0$  \\
\hline
1/4     &9.0102e+00    &1.3856e+01     &1.2526e+02                &6.1160e+01    &1.1121e+02   \\    \hline
1/8     &3.5126e+00    &3.2300e+00     &3.1002e+01                &1.3162e+01    &2.6948e+01 \\     \hline
1/16    &9.7594e-01    &8.0214e-01     &7.9261e+00                &3.4414e+00    &6.8899e+00 \\      \hline
1/32    &2.5123e-01    &2.0189e-01     &2.0219e+00                &9.3080e-01    &1.7653e+00  \\       \hline
1/64    &6.3761e-02    &5.0946e-02     &5.1331e-01                &2.4852e-01    &4.5010e-01      \\           \hline
1/128   &1.6049e-02    &1.2826e-02     &1.2945e-01                &6.4400e-02    &1.1380e-01  \\           \hline
1/256   &4.0207e-03    &3.2163e-03     &3.2468e-02                &1.6320e-02    &2.8570e-02\\           \hline
\hline
Rate      &2.00        &2.00         &2.00                         & 1.98     & 1.99 \\
\hline
\end{tabular}
\end{center}
}}
}
\end{table}

{\bf Test Case 9 (Nonhomogeneous BVP):} In this numerical test, the domain $\Omega=(0,1)^{2}$ is divided into two subdomains by the vertical line $x=\frac{1}{2}$. The diffusive coefficient tensor is the identity matrix $a=I$ for $x<0.5$ and $a=[10,3;3,1]$ for $x\geq 0.5$. The exact solution for this test problem is given by $u=1-2y^{2}+4xy+6x+2y$ for $x<0.5$ and $u=-2y^{2}+1.6xy-0.6x+3.2y+4.3$ for $x\geq 0.5$. This test problem has been considered in \cite{WG-CFO-Wang}. The numerical results are illustrated in Table \ref{Example6:rho1:RECWP}.

\begin{table}[htbp]\centering\scriptsize
{\color{black}{\caption{Test Case 9: Convergence of the lowest order WG-FEM on the $(0,1)^{2}$, discontinuous diffusive tensor, uniform rectangular partitions, stabilization parameter $\rho=1$, and $L^2$ projection of the boundary data.} \label{Example6:rho1:RECWP}
{
\setlength{\extrarowheight}{1.5pt}
\begin{center}
\begin{tabular}{|l|l|l|l|l|l|}
\hline $h$ &$\|u-\S(u_{b})\|_{\infty,\star}$& $\|u-\S(u_{b})\|_0$ & $\|\nabla_d({e_{b}})\|_0$  &$\|\nabla_d u_b-\nabla u\|_{0,\star}$  &    $\|\nabla(Q_{0}u-\S(u_{b}))\|_0$  \\
\hline
1/4     &8.3486e-02    &5.0480e-02     &2.7273e-01                &2.7273e-01    &2.7273e-01   \\    \hline
1/8     &2.1292e-02    &1.3308e-02     &8.4055e-02                &8.4055e-02    &8.4055e-02 \\     \hline
1/16    &5.3305e-03    &3.3745e-03     &2.4203e-02                &2.4203e-02    &2.4203e-02 \\      \hline
1/32    &1.3319e-03    &8.4796e-04     &6.7643e-03                &6.7643e-03    &6.7643e-03  \\       \hline
1/64    &3.3280e-04    &2.1252e-04     &1.8638e-03                &1.8638e-03    &1.8638e-03      \\           \hline
1/128   &8.3166e-05    &5.3180e-05     &5.0695e-04                &5.0695e-04    &5.0695e-04  \\           \hline
1/256   &2.0787e-05    &1.3299e-05     &1.3635e-04                &1.3635e-04    &1.3635e-04\\           \hline
\hline
Rate      &2.00        &2.00         &1.89                        & 1.89    & 1.89 \\
\hline
\end{tabular}
\end{center}
}}
}

\end{table}

\subsection{Numerical experiments with variable coefficients}\label{subSection:NE3} The last part of the numerical experiments shall consider model problems with variable diffusive coefficients.

{\bf Test Case 10 (Nonhomogeneous BVP):} The model problem in this test has domain $\Omega=(0,1)^{2}$ with exact solution $u=\sin(x)\sin(y)$. The diffusive coefficients are given by $a_{11}=1+\exp(y)$, $a_{12}=a_{21}=0.5$, and $a_{22}=1+\exp(x)$. Table \ref{Example6:rho1:RectWP} illustrates the corresponding numerical results arising from the WG finite element method. It can be seen that the numerical results outperform the superconvergence theory established in the last section.

\begin{table}[h]\centering\scriptsize
{\color{black}{\caption{Test Case 10: Convergence of the lowest order WG-FEM on  $\Omega=(0,1)^{2}$ with exact solution $u=\sin(x)\sin(y)$, uniform rectangular partitions, stabilization parameter $\rho=1$, and $L^2$ projection of the boundary data. The coefficient matrix is $a_{11}=1+\exp(y)$, $a_{12}=a_{21}=0.5$, and $a_{22}=1+\exp(x)$.}\label{Example6:rho1:RectWP}
{
\setlength{\extrarowheight}{1.5pt}
\begin{center}
\begin{tabular}{|l|l|l|l|l|l|}
\hline $h$ &$\|u-\S(u_{b})\|_{\infty,\star}$& $\|u-\S(u_{b})\|_0$ & $\|\nabla_d({e_{b}})\|_0$  &$\|\nabla_d u_b-\nabla u\|_{0,\star}$  &    $\|\nabla(Q_{0}u-\S(u_{b}))\|_0$  \\
\hline
1/4     &5.3731e-03    &3.0286e-03     &9.1755e-03                &9.5462e-03    &9.2227e-03   \\    \hline
1/8     &2.1932e-03    &8.4258e-04     &4.2623e-03                &4.3225e-03    &4.2726e-03 \\     \hline
1/16    &7.3908e-04    &2.2727e-04     &1.4827e-03                &1.4946e-03    &1.4850e-03 \\      \hline
1/32    &2.1165e-04    &5.8834e-05     &4.5247e-04                &4.5499e-04    &4.5297e-04  \\       \hline
1/64    &5.5959e-05    &1.4902e-05     &1.3016e-04                &1.3071e-04    &1.3027e-04      \\           \hline
1/128   &1.4294e-05    &3.7418e-06     &3.6290e-05                &3.6414e-05    &3.6315e-05  \\           \hline
1/256   &3.6091e-06    &9.3676e-07     &9.9215e-06                &9.9500e-06    &9.9273e-06\\           \hline
\hline
Rate      &1.99        &2.00         &1.87                       & 1.87     & 1.87 \\
\hline
\end{tabular}
\end{center}
}}
}

\end{table}

{\bf Test Case 11 (Reaction-diffusion equation):}\ This test case is concerned with the following reaction-diffusion equation: {\em Find an unknown function $u$ satisfying
\begin{eqnarray}
-\nabla \cdot (a\nabla u)+c u &=& f, \quad \mbox{in}~~ \O=(0,1)^{2}, \label{re-D-a1}\\
u &=& g,\quad \mbox{on}~~ \pa\O , \label{re-D-aa1}
\end{eqnarray}
where $a_{11}=1+\exp(2x)+y^{3}$, $a_{12}=a_{21}=\exp(x+y)$, $a_{22}=1+\exp(2y)+x^{3}$, and $c=2+x+y$}. Table \ref{Example7:rho1:Rect-di-WP} contains some numerical results for the problem (\ref{re-D-a1})-(\ref{re-D-aa1}) with exact solution $u=2\sin(2\pi x)\sin(3\pi y)$. Once again, the numerical results show an outstanding computational performance of the WG finite element method.

\begin{table}[h]\centering\scriptsize
{\color{black}{\caption{Test Case 11: Convergence of the lowest order WG-FEM on the $(0,1)^{2}$ with exact solution $u=2\sin(2\pi x)\sin(3\pi y)$, uniform rectangular partitions, stabilization parameter $\rho=1$, and $L^2$ projection of the boundary data. The coefficient matrix has entries  $a_{11}=1+\exp(2x)+y^{3}$, $a_{12}=a_{21}=\exp(x+y)$, and $a_{22}=1+\exp(2y)+x^{3}$.}\label{Example7:rho1:Rect-di-WP}
{
\setlength{\extrarowheight}{1.5pt}
\begin{center}
\begin{tabular}{|l|l|l|l|l|l|}
\hline $h$ &$\|u-\S(u_{b})\|_{\infty,\star}$& $\|u-\S(u_{b})\|_0$ & $\|\nabla_d({e_{b}})\|_0$  &$\|\nabla_d u_b-\nabla u\|_{0,\star}$  &    $\|\nabla(Q_{0}u-\S(u_{b}))\|_0$  \\
\hline
1/4     &7.9992e-01    &6.4835e-01     &7.2555e+00                &6.5731e+00    &7.0836e+00   \\    \hline
1/8     &5.2218e-01    &2.3675e-01     &3.5307e+00                &3.3640e+00    &3.4912e+00 \\     \hline
1/16    &2.2828e-01    &8.4924e-02     &1.4659e+00                &1.4322e+00    &1.4581e+00 \\      \hline
1/32    &7.2101e-02    &2.6669e-02     &4.9367e-01                &4.8678e-01    &4.9209e-01  \\       \hline
1/64    &1.9519e-02    &7.3441e-03     &1.4076e-01                &1.3921e-01    &1.4040e-01      \\           \hline
1/128   &4.9895e-03    &1.8941e-03     &3.7186e-02                &3.6817e-02    &3.7101e-02  \\           \hline
1/256   &1.2542e-03    &4.7772e-04     &9.5720e-03                &9.4822e-03    &9.5514e-03\\           \hline
\hline
Rate      &1.99        &1.99         &1.96                       & 1.96     & 1.96 \\
\hline
\end{tabular}
\end{center}
}}
}

\end{table}

\bigskip
\bigskip

\vfill\eject

\newpage

\end{document}